\newtheorem{dfn}{Definition}[section]
\newtheorem{thm}[dfn]{Theorem}
\newtheorem{lem}[dfn]{Lemma}
\newtheorem{rem}[dfn]{Remark}
\newtheorem{prop}[dfn]{Proposition}\makeatletter
\renewenvironment{proof}[1][\proofname]{\par
  \normalfont
  \topsep6\p@\@plus6\p@ \trivlist
  \item[\hskip\labelsep{\bfseries #1}\@addpunct{\bfseries.}]\ignorespaces
}{%
  \endtrivlist
}
\renewcommand{\proofname}{Proof.}
\numberwithin{equation}{section}
\begin{document}

\title{Geometric structures of late points of a two-dimensional simple random walk}


\author{Izumi Okada}








\date{}

\dedicatory{}

\begin{abstract}
As Dembo ($2003$, $2006$) suggested, we consider the problem of late points for a simple random walk in two dimensions. 
It has been shown that the exponents for the number of pairs of late points coincide with those of favorite points and high points in the Gaussian free field, whose exact values are known.  
We determine the exponents for the number of $j$-tuples of late points on average. 
\end{abstract}


\maketitle

\section{Introduction }\label{intro}
This paper discusses the properties of special sites, called late points, in a two-dimensional random walk. 
The cover time is the time taken to randomly walk in $\mathbb{Z}^2_n(=\mathbb{Z}^2/n\mathbb{Z}^2)$ and visit every point of $\mathbb{Z}^2_n$, and 
a late point of a random walk in $\mathbb{Z}^2_n$ is a point of $\mathbb{Z}_n^2$, 
where the first hitting time is nearly equal to the cover time in a certain specific sense. 
We denote the set of $\alpha$-late points 
in $\mathbb{Z}^2_n$ as $\mathcal{L}_n(\alpha)$ for $0<\alpha<1$ as in \cite{Dembo2} (see (\ref{ff1+}) in the next section) 
and  obtain certain asymptotic forms of
\begin{align}\label{correla}
|\{ \vec{x}\in \mathcal{L}_n(\alpha)^j:
d(x_i,x_l)\le n^{\beta} \text{ for any }1\le i,l \le j  \}|
\end{align}
for any $0< \alpha,\beta <1$ and $j\in \mathbb{N}$, where $\vec{x}:=(x_1,\ldots,x_j)$. 
We then solve the related problem posed in Open Problem $4$ in \cite{Dembo*} and Open Problem $4.3$ in \cite{Dembo**}. 

Approximately $60$ years ago, Erd\H{o}s and Taylor \cite{er} proposed a problem concerning a simple random walk in
$\mathbb{Z}^d$. Forty years later, Dembo, Peres, Rosen, and Zeitouni \cite{Dembo1,Dembo3} solved it and other related problems by
developing innovative proofs. These methods yielded results concerning late points in $\mathbb{Z}_n^2$, verified by Dembo et al. 
 \cite{Dembo2}, which showed that the numbers of late points in clusters of different sizes have a variety of power growth exponents. 
 These methods and tools have now been improved. 
 Belius-Kistler \cite{bel} introduced a new multi-scale refinement of the $2$nd moment method. 
 In these estimates, it is more difficult to deal with the lower bound of some numbers than the upper one. 

Conversely, in this study, it is more difficult to compute the upper bound. 
We explain why the results in \cite{ho,Dembo2} cannot be easily extended to arbitrary $j$-tuples of points. 
In \cite{ho,Dembo2}, they estimated the probability that the pairs of points are late points. 
The number of pairs of $\alpha$-late points can be easily computed by this probability. 
However, because the probability is complex, the number of arbitrary $j$-tuples of late points cannot be easily computed 
by the probability that $j$-tuples of points are late points  
(see the explanation of the proofs for the main result in Section \ref{known}). 
Thus, we use a linear algebra approach 
by exploiting the relationship between the probability and ultrametric matrices. 
We find the relationship by estimating the probability with a Green's function. 

Here, we explain the motivation for studying $\alpha$-late points. 
We want to compare the asymptotic behavior of special points in a random walk and in the Gaussian free field (GFF) 
by understanding their similarity between the local time and the GFF. 
In fact, there are several known results concerning similarity. 
Eisenbaum et al. \cite{Eisen} showed a powerful equivalence law called the generalized second Ray-Knight theorem for a random walk and the GFF. 
Ding et al. \cite{Ding1,Ding2} showed a strong connection between the expected maximum of the GFF and the
expected cover time. In addition, for $0<\alpha <1$, they used the set of $\alpha$-high points in the GFF in $\mathbb{Z}_n^2$
(sites where the GFF takes high values) and $\alpha$-favorite points in $\mathbb{Z}^2$ 
(sites where the local time is close to that of the most frequently visited site). 
Dembo et al.  \cite{Dembo2} and Brummelhuis and Hilhorst  \cite{ho} estimated the number of pairs of 
$\alpha$-late points, and Daviaud \cite{ol}  estimated the $\alpha$-high points. 
We show the corresponding results for the $\alpha$-favorite points in our forthcoming paper. 
The similarity between $\alpha$-late points, high points, and favorite points are included in these estimations. 
In addition, we find that local times converge the GFF in long-time through the generalized second Ray-Knight theorem. 
We expect that the similarity helps us to understand the convergence.



\section{Known results and main results}\label{known}
To state our main results, we introduce the following notation. 
Let $d$ be the Euclidean distance and $\mathbb{N}:=\{1,2,\cdots\}$. 
For $n\in \mathbb{N}$, let $D(x,r):=\{y\in \mathbb{Z}^2_n: d(x,y)< r\}$ and for any $G\subset \mathbb{Z}^2_n$, 
$\partial G:=\{y\in G^c: d(x,y)=1 \text{ for some }x\in G \}$. 
For $x\in \mathbb{Z}^2_n$, we sometimes omit $\{\}$ when writing the one-point set $\{x\}$. 
Let $\{S_k\}_{k=1}^{\infty}$ be a simple random walk in $\mathbb{Z}^2_n$. 
Let $P^x$ denote the probability of a simple random walk starting at $x$. 
For simplicity, we write $P$ for $P^0$.  
Let $K(n,x)$ be the number of times visits for the simple random walk to $x$ up to time $n$, 
that is, $K(n,x)=\sum_{i=0}^n1_{\{S_i=x\}}$. 
For any $D\subset \mathbb{Z}^2_n$, let $T_D:=\inf \{m\ge1: S_m\in D\}$. 
Let $\tau_n:=\inf \{m\ge 0: S_m\in \partial D(0,n)\}$. 
$\lceil a \rceil$ denotes the smallest integer $n$ with $n \ge a$. 
We use the same notation for a simple random walk in $\mathbb{Z}^2$.

We introduce the known results for $\alpha$-late points in $\mathbb{Z}^2_n$. 
Dembo et al. \cite{Dembo3} estimated the asymptotic form of the cover time of a simple random walk in $\mathbb{Z}_n^2$ as follows:
\begin{align*}
\lim_{n \to \infty} \frac{\max_{x\in {\mathbb Z}^2_n}T_x}{(n\log n)^2}= \frac{4}{\pi} \quad \text{ in probability.}
\end{align*}
For $0<\alpha<1$, we define the set of $\alpha$-late points in $\mathbb{Z}^2_n$ such that
\begin{align}\label{ff1+}
\mathcal{L}_n(\alpha):=\bigg\{ x\in {\mathbb{Z}^2_n} :
\frac{T_x}{(n\log n)^2}\ge \frac{4\alpha}{\pi} \bigg\}.
\end{align}
Brummelhuis and Hilhorst \cite{ho} estimated the average of (\ref{correla}) for $j=2$, and Dembo et al. \cite{Dembo2} estimated (\ref{correla}) in probability for $j=2$. 
We extend this to a \emph{full multi-fractal analysis}. 

\begin{thm}\label{h2}
For any $0< \alpha,\beta <1$ and $j\in \mathbb{N}$
\begin{align*}
&\lim_{n\to \infty}\frac{\log 
E[|\{ \vec{x}\in \mathcal{L}_n(\alpha)^j:
d(x_i,x_l)\le n^{\beta} \text{ for any }1\le i,l \le j  \}|]}{\log n}\\
&=\hat{\rho}_j(\alpha, \beta),
\end{align*}
where
\begin{align*}
\hat{\rho}_j(\alpha, \beta):=
\begin{cases}
 2+2(j-1)\beta-\frac{2j\alpha}{(1-\beta)(j-1)+1}&(\beta\le 1+\frac{1-\sqrt{j\alpha}}{j-1}),
\\
2(j+1-2\sqrt{j\alpha})&(\beta\ge 1+\frac{1-\sqrt{j\alpha}}{j-1}).
\end{cases}
\end{align*}
\end{thm}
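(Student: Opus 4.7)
\emph{Step 1 (first-moment reduction and tuple partition).} By linearity of expectation,
\begin{align*}
E\bigl[|\{\vec{x}\in{\cal L}_n(\alpha)^j:d(x_i,x_l)\le n^{\beta}\}|\bigr]
 = \sum_{\vec{x}} P\Bigl(T_{x_i}\ge\tfrac{4\alpha}{\pi}(n\log n)^2\text{ for all } 1\le i\le j\Bigr),
\end{align*}
so the task reduces to estimating the joint late probability $P_{\vec{x}}$ for a given tuple and then summing. I would partition tuples by their dyadic \emph{geometric type}: for each pair $i<l$, record $\gamma_{il}\in[0,\beta]$ with $d(x_i,x_l)\asymp n^{\gamma_{il}}$. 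This data induces, at each scale $r_k=n^{1-k/L}$ with $L\asymp\log n$, a partition of $\{x_1,\ldots,x_j\}$ into clusters of points mutually within distance $r_k$, and the count of tuples of a fixed type is a power of $n$ determined by this cluster hierarchy.

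\emph{Step 2 (multi-scale estimate of the joint late probability).} For fixed $\vec{x}$ I would use the excursion decomposition of Dembo--Peres--Rosen--Zeitouni, with the multi-scale refinement of Belius--Kistler in mind, between concentric annuli around cluster representatives. Across one excursion at scale $r_k$, the probability of not hitting a given cluster is, to leading log-order, $1-C_k/\log n$, with $C_k$ controlled by the cluster's effective log-capacity; multiplying over the $\asymp\log n$ excursions at each scale gives $P_{\vec{x}}=n^{-G(\vec{\gamma})+o(1)}$ for an explicit functional $G$, and combining with the count from Step~1 produces $\log E[\cdot]/\log n=F(\vec{\gamma})+o(1)$ for each geometric type.

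\emph{Step 3 (optimization over types).} The exponent $\hat{\rho}_j(\alpha,\beta)$ arises as $\sup_{\vec{\gamma}:\gamma_{il}\le\beta}F(\vec{\gamma})$. This is a finite-dimensional linear-fractional program over cluster hierarchies, and it is here that the \emph{linear algebra approach} mentioned in Section~\ref{known} enters, encoding the scale-to-scale couplings as a matrix optimization. A direct computation shows that the extremum is attained either at the flat configuration $\gamma_{il}\equiv\beta$ (yielding the first branch of $\hat{\rho}_j$, for $\beta\le\beta^*$) or at the critical configuration $\gamma_{il}\equiv\beta^*:=1+(1-\sqrt{j\alpha})/(j-1)$ (yielding the second branch, for $\beta\ge\beta^*$); a short algebra check confirms that the two branches agree at $\beta=\beta^*$.

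\emph{Step 4 (main obstacle).} The principal difficulty is the uniform upper bound on $P_{\vec{x}}$ valid over all geometric types. In the case $j=2$ (\cite{Dembo2,ho}) there is only one pairwise scale to control, but for general $j$ one must track the full cluster hierarchy across all scales and match excursion budgets between consecutive scales---this is precisely where the linear-algebraic reformulation becomes indispensable for extracting the tight exponent $G(\vec{\gamma})$. A secondary obstacle is the matching lower bound on $E[\cdot]$: for each type near the optimum I must supply a matching lower bound on $P_{\vec{x}}$ by forcing the walk to behave generically in each annulus, which I would obtain through a Bonferroni / truncated second-moment argument applied scale by scale to the first-moment sum.
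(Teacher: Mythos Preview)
Your high-level outline (first-moment reduction, joint-probability estimate, optimization over geometric types) matches the paper, but the mechanism you propose in Steps~2--4 diverges from what the paper actually does, and in one place is mistaken.

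\textbf{On Step 2.} The paper does \emph{not} use a multi-scale Belius--Kistler decomposition. It uses a \emph{single} pair of radii $r_{n-1},r_n$ around $x_1$ and counts excursions between them (Proposition~\ref{prop8*}). The crucial point is that in one excursion the probability of hitting the whole set $X=\{x_1,\dots,x_j\}$ is computed exactly via Lemma~\ref{qe2}: a last-exit decomposition gives simultaneous equations whose coefficient matrix is the killed Green matrix $Q=(q_{i,l})$, and the escape probability becomes $\chi(Q)$ (the sum of all entries of $Q^{-1}$) times a one-point factor. After normalization this yields $P(\vec{x}\in{\cal L}_n(\alpha)^j)=n^{-2\alpha\,\chi(A)+o(1)}$ with $A=(\pi G_n(x_i,x_l)/(2\log n))_{i,l}$. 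So the ``linear algebra'' already enters in the probability estimate, not only in the optimization; your multi-scale scheme with cluster-by-cluster excursion budgets is unnecessary and would make the exponent $G(\vec\gamma)$ much harder to identify.

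\textbf{On Step 3.} The optimization in the paper is not over arbitrary $\vec\gamma$ but over \emph{ultrametric} matrices $A\in{\cal M}_j$ (Lemma~\ref{kii} shows every tuple falls into some $\hat{\cal E}_\delta[A]$). The count of tuples with a given $A$ is governed by the tree functional $\Xi(A)$ (Lemma~\ref{kii*}), and the heart of the argument is Proposition~\ref{prop5*}: $\min\{\chi(A):\Xi(A)=t\}=j/(j-t)$, attained at the equidistant matrix $A^{(j)}_{1-t/(j-1)}$. Maximizing $2+2t-2\alpha j/(j-t)$ over $t\in[0,(j-1)\beta]$ then gives $\hat\rho_j(\alpha,\beta)$. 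Your proposal never isolates this constrained minimum of $\chi$, which is the step the paper calls the ``linear algebra approach''.

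\textbf{On Step 4 (lower bound).} Here you have a genuine error: since Theorem~\ref{h2} is about the \emph{expectation}, no second-moment or Bonferroni argument is needed. The paper simply restricts the sum to equidistant tuples $\vec{x}\in{\cal E}[(2^{-j}n^l),(2^jn^l)]$ at the optimal scale $l=\beta\wedge\bigl(1+(1-\sqrt{j\alpha})/(j-1)\bigr)$ and applies the pointwise lower bound of Proposition~\ref{prop8*}. The second-moment machinery is reserved for the in-probability statement (Remark~\ref{h4}), which is a different theorem.
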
 

\begin{rem}\label{h4}
We are preparing a paper on the following result: 
for any $0< \alpha,\beta <1$, and $j\in \mathbb{N}$ in probability 
\begin{align*}
&\lim_{n\to \infty}\frac{\log 
|\{ \vec{x}\in \mathcal{L}_n(\alpha)^j :
d(x_i,x_l)\le n^{\beta} \text{ for any }1\le i,l \le j \}|}{\log n}\\
&=\rho_j(\alpha, \beta),
\end{align*}
where
\begin{align*}
\rho_j(\alpha, \beta):=
\begin{cases}
 2+2(j-1)\beta-\frac{2j\alpha }{(1-\beta)(j-1)+1}&(\beta\le \frac{j}{j-1}(1-\sqrt{\alpha})),
\\
4j(1-\sqrt{\alpha})-2j(1-\sqrt{\alpha})^2/\beta&(\beta\ge \frac{j}{j-1}(1-\sqrt{\alpha})).
\end{cases}
\end{align*}
An explanation of the difference in the exponents is given in \cite{Dembo*,Dembo**} for $j=2$. 
\end{rem}

Now, we provide an explanation of the proofs for the main result. 
In particular, we explain how this problem is connected to the linear algebra approach. 
Roughly speaking, 
certain asymptotic forms of (\ref{correla}) are determined using the hitting probabilities of $j$-points of a simple random walk. 
In addition, the hitting probabilities are determined by Green's functions of $j$-points, 
and the values of Green's functions of $j$-points behave with ultrametricity  in long-time. 
Proposition \ref{kii} yields that we can reduce the configurations of $j$-points to those in an ultrametric position. 
That is why ultrametricity plays an important role in the main result. 

Now, we provide the details. 
For the proof of Theorem \ref{h2}, we must find an appropriate estimate of 
\begin{align}\notag
&E[|\{ \vec{x}\in \mathcal{L}_n(\alpha)^j :
d(x_i,x_l)\le n^{\beta}  \text{ for any }1\le i,l \le j \}|]\\
\label{dec}
=&\sum_{\substack{d(x_i,x_l)\le n^{\beta}, \\x_i\in \mathbb{Z}_n^2,  1\le \forall i,l \le j } }
P( \vec{x} \in \mathcal{L}_n(\alpha)^j).
\end{align}
Note that the position of a $j$-tuple point determines 
the value of $P( \vec{x} \in \mathcal{L}_n(\alpha)^j)$. 
This value can be expressed by a matrix constructed 
from $G_n(x,y):=\sum_{m=0}^\infty P^x(S_m=y,m<\tau_n)$ for $x$, $y\in D(0,n)$, 
which is the Green's function of the walk killed when it exits $D(0,n)$.  
We shall show that to achieve uniformity in $x_1,\ldots,x_j \in D(0,n/3)$,
\begin{align}\label{app1}
&P( \vec{x} \in \mathcal{L}_n(\alpha)^j)
\approx \exp\bigg(-2\alpha \log n 
\chi \bigg(\bigg(\frac{\pi G_n(x_i,x_l)}{2\log n}\bigg)_{1\le i,l \le j}\bigg) \bigg),
\end{align}
where $a_n \approx b_n$ means $\log a_n/ \log b_n\to 1$ as $n\to \infty$ for any sequence and 
$\chi(A)$  is the summation over all the elements of  $A^{-1}$ for any regular matrix $A$. 
We explain the proof of (\ref{app1}) in step (I). 

(I) The proof of (\ref{app1})

In Section \ref{Theoh2} (Proposition \ref{prop8*}), we shall see the probability that $x_1,..., x_j$ in $D(0,n)$  will be  uncovered by the walk under a certain condition determined by the crossing number between two large circles 
is used to estimate the left-hand side of (\ref{app1}). 
In Section \ref{basic}, we obtain 
equations consisting of hitting probabilities and Green's functions (see (\ref{simeq})), 
which show that hitting probabilities can be expressed by 
certain cofactors of $(G_n(x_i,x_l))_{1\le i,l \le j}$ (see (\ref{hha})).  
Finally, we find that the product is equal to the right-hand side of (\ref{app1}). 

Next, we provide an explanation of the proof of Theorem \ref{h2} assuming (\ref{app1}). 
We explain the difficulty of the proof of the upper bound. 
In fact,  by using (\ref{app1}), we find that the logarithm of (\ref{dec}) is asymptotically equal to that of the summation of 
\begin{align}\label{app1*}
\exp\bigg(-2\alpha \log n\chi \bigg(\bigg(\frac{\pi G_n(x^{(n)}_i,x^{(n)}_l)}{2\log n}\bigg)_{1\le i,l \le j} \bigg)\bigg)
\end{align}
over  $(x_1^{(n)},\ldots,x_j^{(n)}) \in D(0,n/3)^j$, where 
$(x_1^{(n)},\ldots,x_j^{(n)})$ is an ultrametric space  with the error term $n^{o(1)}$ as $n\to \infty$. 
Here, the ultrametric space with the error term $n^{o(1)}$ is the following set with an associated distance function:  for any $1\le i,l,p\le j$ with 
$i\neq l $, $ l \neq p$, and $i\neq p$
\begin{align}\label{pos}
d(x_i^{(n)},x_l^{(n)})\le  \max \{ d(x_l^{(n)},x_p^{(n)})n^{o(1)}, d(x_p^{(n)},x_i^{(n)})n^{o(1)} \}. 
\end{align} 
Then, the configuration of $x_1^{(n)},...,x_j^{(n)}$ has a certain nesting structure. 
For example, if we estimate the upper bound of (\ref{dec}) for $j=3$, 
we need to look at an equidistant configuration and the position that the one is far from the others.  
For $j=3$, an equidistant configuration means a triple $(x_1^{(n)}, x_2^{(n)}, x_3^{(n)})$ such that as $n\to \infty$,
\begin{align*}
d(x_1^{(n)}, x_2^{(n)})
\approx d(x_2^{(n)}, x_3^{(n)})
\approx d(x_3^{(n)}, x_1^{(n)}).
\end{align*}
For a general $j\in \mathbb{N}$, there are various positions of $x_1,...,x_j$. 
Therefore, when $j$ increases, computing the upper bound in Theorem \ref{h2} becomes difficult. 
Subsequently, we developed the following unique step. 

(II) The upper bound in Theorem \ref{h2} by assuming (\ref{app1})

We need to find the leading term of (\ref{app1*}) over 
$(x_1^{(n)},\ldots,x_j^{(n)})$ conditioned by (\ref{pos}). 
We will show that 
$(\pi G_n(x^{(n)}_i,x^{(n)}_l)/(2\log n))_{1\le i,l \le j}$ is asymptotically close to the ultrametric matrix as $n\to \infty$ in a certain sense. 
Therefore, we define $\mathcal{M}_j$  (see Section \ref{matrix}), which is a certain set of $j\times j$-ultrametric matrices (see Section $3.3$ in \cite{ale}), and estimate $\chi(A)$ for any $A$ in $\mathcal{M}_j$ to further estimate $\chi((\pi G_n(x^{(n)}_i,x^{(n)}_l)/ (2\log n))_{1\le i,l \le j})$. 
Ultrametric matrices have come to the attention of some linear algebraists and 
have been used as models of systems that can be represented by a bifurcating hierarchical tree (e.g., see \cite{mark}). 
In this study, we find new properties of $\mathcal{M}_j$. 
Proposition \ref{prop5*} 
yields the minimum of $\chi(A)$ for $A$ in $\mathcal{M}_j$ under a certain condition. 
Finally, we obtain the result that the properties of $\mathcal{M}_j$ directly determine the asymptotic behavior of  (\ref{app1*}) 
and that the leading term comes from the equidistant configuration for any $j \in \mathbb{N}$. 
\section{Basic properties}\label{basic}
In this section, we use the preliminary results concerning a simple random walk that will be applied in later sections. 
In proofs given in the remainder of this paper, we use constants that may vary for different occurrences. 


\subsection{Hitting probabilities}\label{hit+}
First, we compute the probabilities that a simple random walk in 
$\mathbb{Z}^2_n$ does not hit a $j$-tuple point until a certain random time.  
Given the $j$ distinct points $x_1,\ldots,x_j$ of $\mathbb{Z}^2_n$ and 
a non-empty subset $\tilde{D}$ of $\mathbb{Z}^2_n$ that is disjoint from $X:=\{x_1,\ldots,x_j\}$,  
let $\tilde{\tau}$ denote a time when the walk enters $\tilde{D}$. 
For $1\le i,l\le j$, and $y \not\in X$, we define 
\begin{align*}
q_{i,l}&:=\sum_{m=0}^\infty P^{x_i}(S_m=x_l ,m<\tilde{\tau}\wedge T_y ),\\
Q&:=(q_{i,l})_{1\le i,l \le j}.
\end{align*}
\begin{lem}\label{qe2}
For $1\le u\le j$, it holds that
\begin{align}
\label{hha}
P^{x_u}(T_y=T_X\wedge \tilde{\tau}\wedge T_y)=
\sum_{i=1}^j
\frac{\text{(cofactor of }q_{u,i})}{\mathrm{det}(Q)}
P^{x_i}(T_y=\tilde{\tau}\wedge T_y). 
\end{align}
We have
\begin{align}
\notag
\min_{1\le u \le j}P^{x_u}(T_y=\tilde{\tau}\wedge T_y)\chi(Q)
\le &\sum_{u=1}^j P^{x_u}(T_y=T_X\wedge \tilde{\tau}\wedge T_y)\\
\label{hha*}
\le &\max_{1\le u\le j}P^{x_u}(T_y=\tilde{\tau}\wedge T_y)\chi(Q). 
\end{align}
Note that for any regular matrix $A$, 
$\chi(A)$ is the summation over all the elements of $A^{-1}$. 
\end{lem}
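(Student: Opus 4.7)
My plan is to set up a system of linear equations over $X=\{x_1,\ldots,x_j\}$ via the strong Markov property and solve it by Cramer's rule. Write $p_u:=P^{x_u}(T_y=T_X\wedge\tilde\tau\wedge T_y)$, $r_u:=P^{x_u}(T_y=\tilde\tau\wedge T_y)$, and
\[
a_{u,m}:=P^{x_u}\bigl(T_X<T_y\wedge\tilde\tau,\;S_{T_X}=x_m\bigr).
\]
Conditioning on whether, starting from $x_u$, the walk first hits $y$, enters $\tilde D$, or returns to $X$ (at some $x_m$) yields $r_u=p_u+\sum_{m=1}^j a_{u,m}\,r_m$; in matrix form, $(I-A)\vec r=\vec p$ with $A=(a_{u,m})$.

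Next I apply the same decomposition directly to the Green-function entry $q_{u,l}$, splitting visits to $x_l$ into the contribution at time $0$ and those occurring after the first return to $X$: $q_{u,l}=\delta_{u,l}+\sum_m a_{u,m}q_{m,l}$, i.e.\ $Q=I+AQ$. Hence $Q^{-1}=I-A$ and $\vec p=Q^{-1}\vec r$. Cramer's rule rewrites each entry as a cofactor, $(Q^{-1})_{u,l}=\mathrm{cof}(q_{l,u})/\det Q$; combined with the symmetry $Q=Q^{\top}$ (SRW on $\mathbb{Z}_n^2$ is reversible with respect to the uniform measure, so $q_{i,l}=q_{l,i}$ and $\mathrm{cof}(q_{l,u})=\mathrm{cof}(q_{u,l})$), this turns $\vec p=Q^{-1}\vec r$ into the cofactor identity (\ref{hha}).

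For (\ref{hha*}) I sum $p_u=\sum_l(Q^{-1})_{u,l}r_l$ over $u$ and interchange the summations:
\[
\sum_{u=1}^j p_u=\sum_{l=1}^j s_l\,r_l,\qquad s_l:=\sum_{u=1}^j(Q^{-1})_{u,l}.
\]
By the symmetry of $Q$, $s_l$ equals the $l$-th row sum of $Q^{-1}=I-A$, which is $1-\sum_m a_{l,m}=P^{x_l}(T_y\wedge\tilde\tau<T_X)\ge 0$, while $\sum_l s_l=\chi(Q)$ by the very definition of $\chi$. Thus $\sum_u p_u$ is a non-negative combination of the $r_l$'s whose total weight is $\chi(Q)$, and the bound $(\min_l r_l)\,\chi(Q)\le\sum_u p_u\le(\max_l r_l)\,\chi(Q)$ is immediate.

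The principal obstacle is justifying $Q^{-1}=I-A$ cleanly: this requires the invertibility of $Q$ (equivalently, that from each $x_u$ the walk exits $X$ into $\tilde D\cup\{y\}$ with positive probability, so that $I-A$ is nonsingular), and a careful strong-Markov accounting when $\tilde D$ or $y$ sits adjacent to $X$. Once $Q^{-1}=I-A$ is in hand, Cramer's rule yields (\ref{hha}) and the probabilistic reading of the row sums of $I-A$ furnishes the non-negativity needed for (\ref{hha*}); everything else is bookkeeping.
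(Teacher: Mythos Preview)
Your proof is correct and takes essentially the same approach as the paper: both arguments reduce to $\vec{p}=Q^{-1}\vec{r}$ together with $Q^{-1}=I-A$ (the paper writes $U$ for your $A$), and both deduce (\ref{hha*}) from the nonnegativity of the row sums of $I-A$. The only cosmetic difference is that the paper obtains the linear relation by a \emph{last-exit} decomposition (conditioning on the last time the walk is in $X$ before $\tilde{\tau}\wedge T_y$, which gives $\vec{r}=Q\vec{p}$ directly), whereas you use the dual \emph{first-entrance} decomposition via the strong Markov property at $T_X$.
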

\begin{proof}
Because the summation of both sides of (\ref{hha}) over $1\le u\le j$ yields (\ref{hha*}), 
it suffices to show (\ref{hha}). 
By decomposing the probability $P^{x_i}(T_y= \tilde{\tau}\wedge T_y)$ 
according to  the last time the walk leaves the set $X$ 
before $\tilde{\tau}\wedge T_y$, we obtain
\begin{align}\label{simeq}
P^{x_i}(T_y= \tilde{\tau}\wedge T_y)=
\sum_{l=1}^j q_{i,l}P^{x_l}(T_y=T_X\wedge \tilde{\tau}\wedge T_y),
\end{align}
for $1\le i\le j$. 
The matrix $Q$ is regarded as the Green kernel for the Markov chain on $X$ with the substochastic transition matrix 
$U:=(u_{i,l})_{1\le i,l \le j}$ given by 
\begin{align*}
u_{i,l}:=P^{x_i}(T_{x_l}=T_X<\tilde{\tau} \wedge T_y), 
\end{align*}
so that $UQ =Q-E$, where $E$ denotes the unit matrix. Accordingly, $Q$ is regular and
\begin{align}\label{hhu**}
Q^{-1}
:=E-U.
\end{align}
Therefore, we have (\ref{hha}). 
\end{proof}

Next, we introduce the estimates of the hitting probabilities for a simple random walk in $\mathbb{Z}^2$, as we only need estimates for ``$\mathbb{Z}^2$'' in this paper. 
\begin{lem}\label{qe3}
To achieve uniformity in $0<r<|x|<R$,
\begin{align}\notag
P^x( T_0<\tau_R)&=\frac{\log (R/|x|)+O(|x|^{-1})}{\log R}(1+O((\log |x|)^{-1})),\\
\label{g2}
P^x( \tau_r<\tau_R)&=\frac{\log (R/|x|)+O(r^{-1})}{\log(R/{r})}. 
\end{align}
\end{lem}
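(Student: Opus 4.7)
The plan is to derive both estimates from the potential kernel $a:\mathbb{Z}^2\to[0,\infty)$ of the two-dimensional simple random walk, characterized by $a(0)=0$, harmonicity on $\mathbb{Z}^2\setminus\{0\}$, and the classical expansion $a(x) = (2/\pi)\log|x| + \kappa + O(|x|^{-2})$ as $|x|\to\infty$, for an explicit constant $\kappa$. Harmonicity off the origin makes $(a(S_{n\wedge T_0\wedge\tau_R}))_n$ a bounded martingale under $P^x$ for any $x\neq 0$, and likewise $(a(S_{n\wedge\tau_r\wedge\tau_R}))_n$ is a bounded martingale under $P^x$ when $|x|>r$, since the walker must cross $\partial D(0,r)$ before it can enter $D(0,r)$ and hence before it can reach the origin. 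I shall also use the elementary observation that $|S_{\tau_R}|\in[R,R+1)$ and $|S_{\tau_r}|\in[r,r+1)$, which gives $a(S_{\tau_R}) = (2/\pi)\log R + \kappa + O(R^{-1})$ and $a(S_{\tau_r}) = (2/\pi)\log r + \kappa + O(r^{-1})$.

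For the first identity, I apply optional stopping at $T_0\wedge\tau_R$; since $a(0)=0$, this yields
\begin{equation*}
a(x) = E^x[a(S_{\tau_R});\tau_R<T_0],
\end{equation*}
so that $P^x(\tau_R<T_0) = [(2/\pi)\log|x|+\kappa+O(|x|^{-2})]/[(2/\pi)\log R+\kappa+O(R^{-1})]$. Taking the complement, cancelling the factor $2/\pi$, and using $|x|<R$ to absorb both $O(R^{-1})$ and $O(|x|^{-2})$ into a single $O(|x|^{-1})$ error, I obtain $P^x(T_0<\tau_R) = [\log(R/|x|)+O(|x|^{-1})]/[\log R + O(1)]$. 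Expanding $(\log R+O(1))^{-1} = (\log R)^{-1}(1+O((\log R)^{-1}))$ and using the monotonicity $(\log R)^{-1}\le(\log|x|)^{-1}$ (valid since $|x|<R$) rearranges the expression into the exact form stated.

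For the second identity, optional stopping at $\tau_r\wedge\tau_R$ gives
\begin{equation*}
a(x) = E^x[a(S_{\tau_r});\tau_r<\tau_R] + E^x[a(S_{\tau_R});\tau_R<\tau_r].
\end{equation*}
Substituting the boundary expansions together with the expansion of $a(x)$, the constants $\kappa$ cancel and I am left with a single linear equation for $p:=P^x(\tau_r<\tau_R)$, which after absorbing $O(R^{-1})$ and $O(|x|^{-2})$ into $O(r^{-1})$ (valid because $r<|x|<R$) becomes $(2/\pi)(\log R - \log r)\,p = (2/\pi)\log(R/|x|) + O(r^{-1})$. Dividing through delivers the stated formula.

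The argument is essentially routine once the potential kernel is in hand; the only care required lies in bookkeeping the various $O(\cdot)$ terms so that they collapse into the precise forms of the lemma. In particular one must use the monotonicity of $t\mapsto 1/\log t$ for the first identity and the ordering $r<|x|<R$ to merge several error symbols throughout. No step presents a substantive obstacle.
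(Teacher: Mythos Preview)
Your argument is correct and is exactly the standard derivation via the potential kernel and optional stopping; the paper itself does not give a proof but simply cites Exercise~1.6.8 in \cite{Law} and (4.1), (4.3) in \cite{rosen}, whose content is precisely this computation. So your approach coincides with what the paper defers to.
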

\begin{proof}
As per Exercise $1.6.8$ in \cite{Law}, or ($4.1$) and ($4.3$) in \cite{rosen}, we obtain the desired result. 
\end{proof}

Next, we give the estimates of a Green's function. 
For $x$, $y\in D(0,n)$, $G_n(x,y)$ is a Green's function. 
\begin{lem}\label{qe4}
For any $x\in D(0,n)$
\begin{align}\notag
G_n(x,0)=&\sum_{m=0}^\infty P^x(S_m=0,m<\tau_n)\\
\label{gg2**}
=&\frac{2}{\pi}\log \bigg(\frac{n}{d(0,x)^+}\bigg)+O((d(0,x)^+)^{-1}+n^{-1}+1),
\end{align}
where $\tau_n$ is the stopping time as we mentioned in Section \ref{known} and 
$a^+=a\vee 1$. 
In particular, for $x,y \in D(0,n/3)$,
\begin{align}\label{gg2*}
G_n(x,y)
=\frac{2}{\pi}\log \bigg(\frac{n}{d(x,y)^+}\bigg)+O((d(x,y)^+)^{-1}+n^{-1}+1).
\end{align}
\end{lem}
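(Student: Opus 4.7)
The plan is to proceed through the potential kernel of the two-dimensional simple random walk, which is the classical route underlying Exercise 1.6.8 in Lawler. Let $a\colon\mathbb{Z}^2\to[0,\infty)$ denote that potential kernel, characterised by $a(0)=0$ and having asymptotic
\[
a(z)=\frac{2}{\pi}\log|z|+\kappa+O(|z|^{-2})
\]
as $|z|\to\infty$, for an explicit constant $\kappa$. The crucial fact is that $a(S_m-y)-L_m^y$ is a martingale, where $L_m^y:=\sum_{k=0}^m 1_{\{S_k=y\}}$ is the local time at $y$.

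For the first estimate (\ref{gg2**}), I would apply optional stopping at the bounded time $\tau_n\wedge m$ and let $m\to\infty$, using standard uniform integrability (the exit time from $D(0,n)$ has finite mean, and $a$ grows only logarithmically). This yields the identity
\[
G_n(x,0)=E^x\!\left[a(S_{\tau_n})\right]-a(x).
\]
Since $S_{\tau_n}$ sits on the outer boundary $\partial D(0,n)$, one has $n\le|S_{\tau_n}|\le n+1$, so the asymptotic of $a$ forces $E^x[a(S_{\tau_n})]=\tfrac{2}{\pi}\log n+\kappa+O(n^{-1})$. Expanding $a(x)$ via the same asymptotic, with the convention $d(0,x)^+=d(0,x)\vee 1$ absorbing the discrepancy between $a(0)=0$ and the logarithmic formula near the origin, gives $a(x)=\tfrac{2}{\pi}\log d(0,x)^++\kappa+O((d(0,x)^+)^{-1}+1)$; subtracting produces the claimed formula.

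For (\ref{gg2*}), the same martingale argument centred at $y$ rather than at $0$ gives
\[
G_n(x,y)=E^x\!\left[a(S_{\tau_n}-y)\right]-a(x-y).
\]
Because $y\in D(0,n/3)$ we have $|S_{\tau_n}-y|\in[2n/3,\,4n/3+1]$, hence $\log|S_{\tau_n}-y|=\log n+O(1)$, and the first term becomes $\tfrac{2}{\pi}\log n+\kappa+O(1)$. Combined with the asymptotic of $a$ at $x-y$, this yields the stated bound. The main subtlety is precisely this step: the translated exit position is no longer nearly radial about $y$, so one gives up an $O(n^{-1})$ error and only gets $O(1)$, which is exactly what the additive $+1$ in the error budget was designed to accommodate. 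Everything else is bookkeeping of the potential-kernel expansion. Alternatively, one can simply invoke the references cited (Lawler, Exercise 1.6.8, and (4.1), (4.3) of \cite{rosen}), which package precisely these computations.
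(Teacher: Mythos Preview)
Your argument is correct and is essentially the content of the references the paper cites for (\ref{gg2**}) (Lawler, Exercise~1.6.8; Rosen, (2.1)); you have spelled out the potential-kernel/optional-stopping computation that those references package. For (\ref{gg2*}) the paper proceeds slightly differently: instead of rerunning the martingale identity at the shifted centre $y$, it uses domain monotonicity to sandwich $G_n(x,y)$ between $G_{2n/3}(x-y,0)$ and $G_{4n/3}(x-y,0)$ and then applies (\ref{gg2**}) to each. Your direct bound $|S_{\tau_n}-y|\in[2n/3,\,4n/3+1]$ encodes exactly the same observation---that shifting by $y\in D(0,n/3)$ changes the effective radius only by a bounded multiplicative factor---so the two derivations are equivalent in substance.
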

\begin{proof}
As per Proposition $1.6.7$ in \cite{Law} or ($2.1$) in \cite{rosen}, we obtain (\ref{gg2**}). 
Therefore, for $x,y \in D(0,n/3)$,
\begin{align*}
G_n(x,y)&\le\sum_{m=0}^\infty P^{x-y}(S_m=0,m<\tau_{4n/3})\\
&=\frac{2}{\pi}\log \bigg(\frac{n}{d(x,y)^+}\bigg)+O((d(x,y)^+)^{-1}+n^{-1}+1),\\
G_n(x,y)&\ge \sum_{m=0}^\infty P^{x-y}(S_m=0,m<\tau_{2n/3})\\
&=\frac{2}{\pi}\log \bigg(\frac{n}{d(x,y)^+}\bigg)+O((d(x,y)^+)^{-1}+n^{-1}+1).
\end{align*}
Subsequently, we obtain (\ref{gg2*}). 
\end{proof}
\begin{rem}
In addition, with the aid of (\ref{gg2**}), the strong Markov property yields 
\begin{align}\label{g2*}
P(\tau_n< T_0)
=(\sum_{m=0}^\infty P(S_m=0,m<\tau_n))^{-1}
=\frac{\pi}{2\log n}(1+o(1)).
\end{align}
\end{rem}
\section{Proof of Theorem \ref{h2}}\label{Theoh2}
In this section, we provide the proof of Theorem \ref{h2}. 
We give estimates for the proof of Theorem \ref{h2} in Section \ref{some in Theoh2} and 
 the proof of Theorem \ref{h2} in Section \ref{Theoh2*}. 
\subsection{Some estimates for the proof of Theorem \ref{h2}}\label{some in Theoh2}
To prepare estimates for the main result, we add the following definitions. 
We fix  $j\in \mathbb{N}$. 
For $0< \eta \le (1-\beta) \wedge \beta$, 
let $\mathcal{M}_j=\mathcal{M}_j^{\beta, \eta}$ be the set of $j\times j$-matrices $(a_{i,l})_{1\le i,l \le j}$ 
satisfying the following properties: 
\begin{enumerate}
\item[$(a)$]  symmetric,
\item[$(b)$]   $a_{i,i}=1$, $1-\beta \le a_{i,l}\le 1-\eta$ for any $1\le i \neq l \le j$, and 
\item[$(c)$]   $a_{i,l} \ge \min \{ a_{l,p}, a_{i,p}\}$ for any 
$1\le i, l, p  \le j$  with $i \neq l, l\neq p,p\neq i$. 
\end{enumerate}
A strictly ultrametric matrix is a symmetric matrix with nonnegative entries that satisfies $(c)$; in addition, 
$a_{i,i}> \max\{a_{i,k}: k \in \{1,\ldots,i-1,i+1,\ldots,j\}\}$ for any $1\le i\le j$ (see \cite{mar}). 
Subsequently, any element in $\mathcal{M}_j$ is an ultrametric matrix. 
In Proposition \ref{prop2*}, 
we will show any matrix in  $\mathcal{M}_j$ is a regular matrix. 
Given the real-valued $j\times j$-matrices $M:=(m_{i,l})_{1\le i,l \le j} $ and 
$M':=(m'_{i,l})_{1\le i,l \le j}$, let 
\begin{align*}
 \mathcal{E}[M, M']
=& \mathcal{E}[M, M'](j,n)\\
:=&\{ \vec{x}\in (\mathbb{Z}_n^2)^j :m_{i,l} \le d(x_i,x_l)\le m'_{i,l} \text{ for any }1\le i\neq l \le j \}.
\end{align*}
Note that the set is independent of the diagonal elements of a matrix. 
When $m_{i,l}=a$ and $m'_{i,l}=a'$ for $1\le i\neq l \le j$, we simply write $\mathcal{E}[(a), (a')]$. 
For $A \in   \mathcal{M}_j$ and $\delta>0$ let
\begin{align*}
\hat{\mathcal{E}}_{\delta}[A]
=\hat{\mathcal{E}}_{\delta}[A](j,n)
:=&\mathcal{E}\bigg[\bigg(\frac{1}{2^j}n^{1-a_{i,l}}\bigg)_{1\le i,l \le j}, 
(2^j n^{1-a_{i,l}+\delta})_{1\le i,l \le j}\bigg].  
\end{align*}
By the following proposition, we find that 
it is possible to reduce the configuration of points to those in an ultrametric position. 

\begin{prop}\label{kii}
Fix $0<\beta<1$. 
For any $0<\delta<1-\beta$, 
there exists $n_0 \in \mathbb{N}$ such that for any $n\ge n_0$ and $\vec{x}\in \mathcal{E}[ (n^{\eta}) , (n^\beta) ]$, 
there exists $A\in \mathcal{M}_j^{\beta,\eta} $ such that  $\vec{x}\in \hat{\mathcal{E}}_{\delta}[A]$ holds. 
\end{prop}

We will show Proposition \ref{kii} in Section \ref{various pro}. 
Next, we introduce our goal in this subsection.  

\begin{prop}\label{prop8*}
Fix $0<\beta<1$. 
For any $\epsilon>0$, there exist $C>0$  and $0<\delta<1-\beta$ 
such that for any $0<\alpha <1$, $x\in \mathbb{Z}_n^2$ and all sufficiently large $n\in \mathbb{N}$ that satisfy 
$A\in \mathcal{M}_j^{\beta,\eta}$ and 
$\vec{x} \in  \hat{\mathcal{E}}_{\delta}[A]$, 
it holds that 
\begin{align*}
P(\vec{x} \in \mathcal{L}_n(\alpha)^j)
\le 
Cn^{-2\alpha \chi(A)+\epsilon}
\end{align*}
and for any $0<\alpha <1$, $x\in \mathbb{Z}_n^2$ with $x_1 \in D(0,n/10)^c$
 and all sufficiently large $n\in \mathbb{N}$ that satisfy 
$A\in \mathcal{M}_j^{\beta,\eta}$ and 
$\vec{x} \in  \hat{\mathcal{E}}_{\delta}[A]$, 
it holds that 
\begin{align*}
 C^{-1} n^{-2\alpha \chi(A)-\epsilon}
\le P(\vec{x} \in \mathcal{L}_n(\alpha)^j).
\end{align*}
\end{prop}

To show the proposition, we prepare notations and provide the lemma.  
For $k,n\in \mathbb{N}$ with $k\le n$, 
let $n_n=n_n(\alpha):=\lceil 2\alpha n^2\log n \rceil$, 
$r_k:=k!$, and $K_n:=\lceil n^b r_n \rceil$ for $b \in [1,3]$. 
Let $\mathcal{R}^{x_1}_n=\mathcal{R}^{x_1}_n(\alpha)$ 
be the time until completion of the 
first $n_n(\alpha)$ excursions of the path from $\partial D(x_1, r_{n-1})$ to $\partial D(x_1,r_n)$ 
(see the definition of excursions in Lemma $2.3$  in \cite{Dembo2} et al). 
\begin{lem}
Fix $0<\beta<1$. 
For any $\epsilon>0$, there exist $C>0$  and $0<\delta<1-\beta$ 
such that for any $0<\alpha <1$, $x\in \mathbb{Z}_{K_n}^2$, and all sufficiently large $n\in \mathbb{N}$ that satisfy 
$A\in \mathcal{M}_j^{\beta,\eta}$ and 
$\vec{x} \in  \hat{\mathcal{E}}_{\delta}[A](j,K_n)$, 
it holds that 
\begin{align*}
P(T_X >\mathcal{R}^{x_1}_n(\alpha))
\le 
CK_n^{-2\alpha \chi(A)+\epsilon/2}.
\end{align*}
\end{lem}
\begin{proof}
By the strong Markov property, it suffices to show that 
uniformity in $y_0\in \partial D(x_1,r_{n-1})$ and 
$\vec{x}\in \hat{\mathcal{E}}_{\delta}[A]$, 
\begin{align}\label{res}
P^{y_0}(T_X<T_{\partial D(x_1, r_n)} )
=\frac{1 +o(1)}{n}\chi(A).
\end{align}
Note that for $\vec{x}\in \hat{\mathcal{E}}_{\delta}[A]$, 
$1\le i,l \le j$,
and $y_0\in \partial D(x_1,r_{n-1})$
\begin{align*}
\sum_{m=0}^{\infty}P^{x_i}(S_m=x_l,m<T_{\partial D(x_1,r_n)} \wedge T_{y_0} )
\le &G_{r_n}(x_i-x_1,x_l-x_1),\\
\sum_{m=0}^{\infty}P^{x_i}(S_m=x_l,m<T_{\partial D(x_1,r_n)} \wedge T_{y_0} )
\ge& G_{r_{n-1}}(x_i-x_1,x_l-x_1).
\end{align*}
Because $r_{n-1}/(2^jK_n^{\beta+\delta}) \ge 3$ for all sufficiently large $n\in \mathbb{N}$ 
and $x_i, x_l \in D(x_1, 2^jK_n^{\beta+\delta})$ hold,  
(\ref{gg2*}) yields
\begin{align*}
\sum_{m=0}^{\infty}P^{x_i}(S_m=x_l,m<T_{\partial D(x_1,r_n)} \wedge T_{y_0} )
=\frac{2}{\pi }(\log r_{n} - \log d(x_i,x_l)^++O(1)).
\end{align*}
Subsequently, (\ref{g2}) yields that to achieve uniformity in 
$\vec{x}\in \hat{\mathcal{E}}_{\delta}[A]$  and $y_0\in \partial D(x_1,r_{n-1})$, 
\begin{align*}
P^{x_i}(T_{y_0}<T_{\partial D(x_1,r_n)} )=\frac{1 +o(1) }{n }. 
\end{align*}
Therefore, to achieve uniformity in
$\vec{x}\in \hat{\mathcal{E}}_{\delta}[A]$, 
\begin{align*}
&\bigg|\frac{2 }{\pi }(\log r_n - \log d(x_i,x_l)^++o(1))
-a_{i,l}\frac{2n\log n }{\pi} \bigg|\\
\le&\max \frac{2 n\log n }{\pi}| b_{i,l}-a_{i,l}|
= o(1) n \log n,
\end{align*}
where the above maximum is over $ b_{i,l}= a_{i,l}+o(1)$ with
 $1\le i,l \le j$. 
In addition, as per Remark \ref{prop4**}, to achieve uniformity in 
$\vec{x}\in \hat{\mathcal{E}}_{\delta}[A]$, 
\begin{align}\notag
&\bigg|\chi \bigg(\bigg(\frac{2 }{\pi }(\log r_n - \log d(x_i,x_l)^++O(1))\bigg)_{1\le i,l \le j} \bigg)
-\frac{\pi}{2 n\log n}\chi(A)\bigg|\\
\label{geoB1}
= &\frac{o(1)}{n \log n}.
\end{align}
Therefore, if we substitute $T_{\partial D(x_1,r_n)}$ and $y$  for $\tilde{\tau}$ and $y_0$ in (\ref{hha*}), (\ref{gg2*}) yields
\begin{align}\label{j*2}
\sum_{l=1}^jP^{x_l}(T_{y_0} < T_X\wedge T_{\partial D(x_1, r_n )} )
=\frac{1 +o(1)}{n }\frac{\pi}{2 n\log n}\chi(A).
\end{align}
Note that as per (\ref{g2*}), we obtain
\begin{align*}
P^{y_0}(T_{y_0} < T_X\wedge T_{\partial D(x_1, r_n)} )
=1-\frac{\pi+o(1)}{2 n\log n}.
\end{align*}
Subsequently, 
\begin{align*}
&P^{y_0}( T_X<T_{\partial D(x_1, r_n)} )\\
=&\sum_{i=0}^{\infty} P^{y_0}(T_{y_0} <T_X \wedge T_{\partial D(x_1, r_n)}  )^i
P^{y_0}(T_X =  T_{y_0}\wedge T_X\wedge T_{\partial D(x_1, r_n)} )\\
=&\frac{1}{1- P^{y_0}(T_{y_0} <T_X \wedge T_{\partial D(x_1, r_n)} )}
P^{y_0}(T_X < T_{y_0}\wedge T_{\partial D(x_1, r_n)})\\
=&\frac{ 2(1+o(1)) n \log n}{\pi}
P^{y_0}(T_X < T_{y_0}\wedge T_{\partial D(x_1, r_n)} )\\
=&\frac{2(1+o(1))n \log n}{\pi}
\sum_{l=1}^jP^{x_l}(T_{y_0} < T_X\wedge T_{\partial D(x_1, r_n)} ).
\end{align*}
The last equality comes from the time-reversal of a simple random walk. 
Therefore, in view of (\ref{j*2}), we have (\ref{res}). 
\end{proof}

\begin{proof}[Proof of Proposition \ref{prop8*}]
Note that we only have to show the result for a sequence $K_n$, 
because $b \in [1,3]$ is arbitrary and thus $K_n$ covers all sufficiently large integers. 
Fix $0<\delta_1<\alpha$. 
As per $(3.19)$ in \cite{Dembo2}, 
there exist $c>0$ and $\delta>0$ such that for any $0<\alpha<1$, $n\in \mathbb{N}$ and $\vec{x}\in \hat{\mathcal{E}}_{\delta}[A]$,
\begin{align*}
P\bigg(\frac{4\alpha}{\pi}(K_n\log K_n)^2 <\mathcal{R}_n^{x_1}(\alpha-\delta_1)\bigg)
\le c^{-1}\exp(-cn^2\log n).
\end{align*} 
We find that for any $n\in \mathbb{N}$ 
\begin{align}\notag
&P(\vec{x} \in \mathcal{L}_{K_n}(\alpha)^j )\\
\notag
\le &
P(T_X >\mathcal{R}^{x_1}_n(\alpha-\delta_1) )
+P\bigg(\frac{4\alpha}{\pi}(K_n\log K_n)^2 <\mathcal{R}_n^{x_1}(\alpha-\delta_1) \bigg)\\
\label{sr1}
\le 
&CK_n^{-2(\alpha-\delta_1) \chi(A)+\epsilon/2}
 +c^{-1}\exp(-cn^2\log n).
\end{align} 
Note that as per Lemma $4.1$ in \cite{Dembo2},  for any $0<\delta_2<1-\alpha$, 
there exists $c>0$ such that for any $n\in \mathbb{N}$ and $\vec{x}\in \hat{\mathcal{E}}_{\delta}[A]$,
\begin{align*}
P\bigg(\frac{4\alpha}{\pi}(K_n\log K_n)^2 >\mathcal{R}_n^{x_1}(\alpha+\delta_2)\bigg)
\le c^{-1}\exp(-cn^2\log n).
\end{align*} 
Then, we have that 
for $\vec{x}\in \hat{\mathcal{E}}_{\delta}[A]$ with $x_1 \in D(0,n/10)^c$,
\begin{align}\notag
&P(\vec{x} \in \mathcal{L}_{K_n}(\alpha)^j )\\
\notag
\ge & P(T_X >\mathcal{R}^{x_1}_n(\alpha+\delta_2) )
-P\bigg(\frac{4\alpha}{\pi}(K_n\log K_n)^2 >\mathcal{R}_n^{x_1}(\alpha+\delta_2)\bigg)\\
\label{sr3}
\ge &cK_n^{-2(\alpha+\delta_2) \chi(A)-\epsilon/2}
 -c^{-1}\exp(-cn^2\log n).
\end{align}
Therefore, if we select sufficiently small $\delta_1$, $\delta_2>0$ for $\epsilon>0$, 
we obtain Proposition \ref{prop8*}. 
\end{proof}


\subsection{Proof of Theorem \ref{h2}}\label{Theoh2*}
\begin{proof}[Proof of the upper bound  in Theorem \ref{h2}]
Fix $0<\beta<1$. Propositions \ref{prop8*} and \ref{prop9*} yield that 
for any $\epsilon>0$, there exists $C>0$ such that for any $0<\alpha <1$, and $n\in \mathbb{N}$, 
\begin{align}\label{kj1}
\sum_{\vec{x}\in \mathcal{E}[ (n^{\eta}) , (n^\beta) ]}P(\vec{x} \in \mathcal{L}_n(\alpha)^j)
\le 
C n^{\hat{\rho}_j(\alpha, \beta)+\epsilon}.
\end{align}
Now we extend the result for  ``$\mathcal{E}[ (n^{\eta }), (n^\beta)]$'' to ``$\mathcal{E}[ (0), (n^\beta)]$'' 
by performing induction on $j \in \mathbb{N}$. 
We assume that for any $\epsilon>0$, there exists $C>0$ such that for any $n\in \mathbb{N}$, 
\begin{align*}
\sum_{(x_1,\ldots,x_{j-1})\in \mathcal{E}[ (0) , (n^\beta) ]}P(x_1,\ldots,x_{j-1} \in \mathcal{ L}_n(\alpha))
\le 
C n^{\hat{\rho}_j(\alpha, \beta)+(j-1)\epsilon}.
\end{align*}
For $j=1$, 
according to Proposition \ref{prop8*}, it is trivial that  
\begin{align*}
\sum_{x\in \mathbb{Z}^2_n}P(x\in  \mathcal{ L}_n(\alpha))
\le 
C n^{\hat{\rho}_1(\alpha,\beta)+\epsilon}.
\end{align*}
Let us assume that the claim holds for $j-1$ with $j\ge 2$. 
We show that the claim holds for $j$. 
For any $\epsilon>0$,  we select $\eta>0$ with $2\eta<\epsilon$ and $n_0$ given in Proposition \ref{kii}.  
Therefore, according to (\ref{kj1}), Lemma \ref{prop7*}, and induction, we obtain that for any $n \ge n_0$, 
\begin{align*}
&E[|\{ \vec{x}\in \mathcal{L}_n(\alpha)^j :
d(x_i,x_l)\le n^{\beta}  \text{ for any }1\le i,l \le j \}|] \\
=&
\sum_{ \vec{x}\in\mathcal{E}[(0),(n^\beta)]}
P(\vec{x} \in \mathcal{L}_n(\alpha)^j)\\
\le&  \sum_{ \vec{x}\in \mathcal{E}[ (n^{\eta}) , (n^\beta) ]}
P(\vec{x} \in \mathcal{L}_n(\alpha)^j)\\
& + \sum_{ (x_1,...,x_{j-1})\in  \mathcal{E}[(0),(n^\beta)](j-1)}
P(x_1,\ldots,x_{j-1}\in \mathcal{L}_n(\alpha))Cn^{2 \eta}\\
\le&  C n^{\hat{\rho}_j(\alpha, \beta)+\epsilon} 
+C n^{\hat{\rho}_{j-1}(\alpha, \beta)+(j-1)\epsilon+2\eta}
\le C n^{\hat{\rho}_j(\alpha, \beta)+j\epsilon}.
\end{align*}
As it suffices to show it for all sufficiently large $n\in \mathbb{N}$, we obtain the desired result. 
\end{proof}


We write $A_r^{(j)}$ for $(a_{i,l})_{1\le i,l \le j}$ if  
$a_{i,i}=1$ and $a_{i,l}=r$ for $1\le i\neq l\le j$ and $1-\beta \le r \le 1- \eta$. 
Note that $A_{r}^{(j)}\in \mathcal{M}_j$. 
In addition, $A_{r}^{(1)}$ is independent of $r$, and therefore, 
we sometimes write $A^{(1)}$.  
\begin{proof}[Proof of the lower bound in Theorem \ref{h2}]
It is trivial that 
$\chi(A^{(j)}_{1-l})
=j/(1+(j-1)(1-l))$. 
Fix $0<\beta<1$ and $\epsilon>0$ and pick $\delta>0$ in Proposition  \ref{prop8*}. 
If we consider $\vec{x} \in \mathcal{E}[(n^{l}),(5jn^{l})]$ with $x_1\in D(0,n/10)^c$ 
for $0<\eta<l<1$, then Proposition \ref{prop8*} yields that 
for any $0<\alpha<1$ and all sufficiently large $n\in \mathbb{N}$ with $5jn^{l}\le 2^j n^{l+\delta}$, 
\begin{align*}
P(\vec{x} \in \mathcal{L}_n(\alpha)^j)
\ge \exp\bigg(-\frac{2j\alpha \log n}{1+(j-1)(1-l)}+o(\log n)\bigg).
\end{align*}
Let 
\begin{align*}
R:=
\bigg\{&\vec{x}: x_1\in \mathbb{Z}_n^2\cap D\bigg(0,\frac{n}{10}\bigg)^c,\\
&x_i \in x_1+(0, 4(i-1)n^l )+D(0,n^l )\text{ for any }2\le i\le j \bigg\}.
\end{align*} 
Note that there exists $c>0$ such that for any $n\in \mathbb{N}$, 
\begin{align*}
|R|\ge cn^{2+2(j-1)l}. 
\end{align*}
In addition, 
\begin{align*}
\mathcal{E}[(n^l),(5jn^l)] \supset R. 
\end{align*}
Therefore, Proposition \ref{prop9*} and the simple computation yield that for $\eta<s<1$
\begin{align*}
&\sum_{\vec{x} \in \mathcal{E}[(n^{l}),(5jn^l)]}
P(\vec{x} \in \mathcal{L}_n(\alpha)^j)\\
\ge &cn^{2+2(j-1)l} \times
\exp \bigg(-\frac{2j\alpha \log n}{1+(j-1)(1-l)}+o(\log n)\bigg). 
\end{align*}
As $2+2(j-1)l-2\alpha j /(1+(j-1)(1-l))
|_{l=(1+(1-\sqrt{j\alpha})/(j-1)) \wedge \beta }=\hat{\rho}_j(\alpha,\beta)$ 
and $\eta$ is arbitrary, we obtain the result. 
\end{proof}
\section{Matrix argument}\label{matrix}
In this section, our goal is to arrive at Proposition \ref{prop9*}, which is used in 
the proof of the upper bound in Theorem \ref{h2}.   
We use only Propositions \ref{prop9*} and \ref{prop4*} in 
the proof of Theorem \ref{h2}.  
To show Proposition \ref{prop9*}, we prepare some propositions and lemmas in Section \ref{cl of matrix} and provide proofs in Section \ref{various pro}. 
\subsection{Claims}\label{cl of matrix}
We first establish results that yield the properties of matrices in $\mathcal{M}_j$ and then those that link the properties of $\mathcal{M}_j$ with the main results. 
Note that $(c)$ in the definition of $\mathcal{M}_j$ in Section \ref{some in Theoh2} can be rewritten as 
\begin{align*}
(d)&\text{ for any }1\le  i, l, p \le j\text{ with }i \neq l, l\neq p,p\neq i,\\
&\text{ it holds that  }a_{i,l}< a_{i,p} \Rightarrow a_{l,p}=a_{i,l}
\end{align*}
assuming $(a)$ and $(b)$. 
Hereafter, we simply write $A$ for $(a_{i,l})_{1\le i,l \le j}$.

Now, we introduce propositions that provide the properties of $\mathcal{M}_j$. 
For $j_k \in \mathbb{N}$, let $A_k:=(a^{(k)}_{i,l})_{1\le i,l \le j_k}\in \mathcal{M}_{j_k}$ 
($\forall k=1,\ldots,m$) and 
$j=\sum_{k=1}^m j_k$. 
For the injective function $\sigma_k : \{1,\ldots, j_k\} \to \{1, \ldots ,j \}$ ($\forall k=1,\ldots,m$) with
$ \cup_{k=1}^m \mathrm{Im \; } \sigma_k =\{1,\ldots, j \}$ 
and $s\le \min \{ a_{i,l}^{(k)} \mid  k \in \{ 1,\ldots,m \}, i,l \in \{ 1, \ldots, j_k \} \}$, 
we let $A=
A_1^{\sigma_1} \boxplus_s ... \boxplus_s A_m^{\sigma_m}$ if
\begin{align*}
a_{i,l}:=
\begin{cases}
a^{(k)}_{\sigma_k^{-1}(i), \sigma_k^{-1}(l) }
& (\forall i,l \in \mathrm{Im \; } \sigma_k, k=1,\ldots,m ),\\
s&
\mbox{otherwise.}
\end{cases}
\end{align*}
Note that definitions yield
$A_1^{\sigma_1} \boxplus_s ... \boxplus_s A_m^{\sigma_m} \in \mathcal{M}_j$ and 
$\min_{1\le i,l \le j}a_{i,l}=s$. 
We define $(a_{i,l})_{1\le i,l \le j} \cong ({a'}_{i,l})_{1\le i,l \le j} $ 
if there exists a bijective function $\sigma: \{1,\ldots, j \} \to \{1, \ldots, j \}$ 
such that $a_{\sigma(i),\sigma(l)}={a'}_{i,l}$ for any $1\le i,l \le j$. 
\begin{prop}\label{prop1*} 
It holds that for any $j\ge2$ with $j\in \mathbb{N}$, $A \in \mathcal{M}_j$ satisfies the following: 
there exist $A_k\in \mathcal{M}_{j_k}$, $\sigma_k$ for $k=1,\ldots,m$ with $m\ge2$ such that 
 $A=A_1^{\sigma_1}\boxplus_s ... \boxplus_s A_m^{\sigma_m}$, 
 where $s< \min \{ a_{i,l}^{(k)} \mid k \in \{ 1,\ldots,m \}, i,l \in \{ 1, \ldots, j_k \} \}$. 
\end{prop}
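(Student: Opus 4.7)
The plan is to let the common off-block constant $s$ be the overall minimum of the off-diagonal entries of $A$, and to use the ultrametric condition (c) to show that the relation ``$a_{i,l}>s$'' partitions $\{1,\ldots,j\}$ into the desired blocks. Concretely, I would set $s := \min\{a_{i,l} : 1 \le i \neq l \le j\}$, which is well-defined because $j \ge 2$ and which satisfies $s \le 1-\eta < 1$ by (b). I then define a relation $\sim$ on $\{1,\ldots,j\}$ by declaring $i \sim l$ if and only if either $i=l$ or $a_{i,l} > s$; the equivalence classes $C_1,\ldots,C_m$ will become the blocks of the decomposition.

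The first real step is to verify that $\sim$ is an equivalence relation, and this is where (c) does the essential work. Reflexivity follows from $a_{i,i}=1>s$ and symmetry from (a). For transitivity, suppose $i,l,p$ are pairwise distinct with $a_{i,l}>s$ and $a_{l,p}>s$; applying (c) with the roles of $(i,l,p)$ in its statement replaced by $(i,p,l)$ yields $a_{i,p} \ge \min\{a_{p,l}, a_{i,l}\} > s$, so $i \sim p$. Having established $\sim$ as an equivalence relation, I pick any bijection $\sigma_k : \{1,\ldots,j_k\} \to C_k$ (where $j_k := |C_k|$) and set $A_k := (a_{\sigma_k(i),\sigma_k(l)})_{1 \le i,l \le j_k}$.

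It then remains to check the assertions of the proposition. For the decomposition identity $A = A_1^{\sigma_1}\boxplus_s \cdots \boxplus_s A_m^{\sigma_m}$: within each block the entries coincide with $A_k$ by construction, while for $i \in C_k$ and $l \in C_{k'}$ with $k\neq k'$ one has $a_{i,l} \le s$ (else $i \sim l$, contradicting that $C_k,C_{k'}$ are distinct classes) together with $a_{i,l} \ge s$ (by the definition of $s$), forcing $a_{i,l}=s$. Each restriction $A_k$ inherits (a), (b), and (c), hence $A_k \in \mathcal{M}_{j_k}$. The bound $m \ge 2$ holds because any pair of distinct indices realising the minimum $s$ must lie in two different classes. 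Finally, the strict inequality $s < \min\{a_{i,l}^{(k)} : 1 \le i,l \le j_k\}$ holds for every $k$ since off-diagonal within-class entries exceed $s$ by the very definition of $\sim$, and diagonal entries equal $1 > 1-\eta \ge s$. The main obstacle is the transitivity step: it is delicate precisely because (c) is a lower bound statement, so one must pick the right relabelling of $(i,l,p)$ to put $a_{i,p}$ on the left-hand side, and one must rule out the borderline case $a_{i,p}=s$ by exploiting the strict inequalities in the hypotheses.
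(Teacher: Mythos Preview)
Your proof is correct and follows essentially the same route as the paper: set $s$ to be the minimum (off-diagonal) entry, use the ultrametric condition (c) to show that ``$a_{i,l}>s$'' defines an equivalence relation, and take the equivalence classes as the blocks. Your write-up is in fact more explicit than the paper's in verifying $m\ge 2$, $A_k\in\mathcal{M}_{j_k}$, and the strict inequality $s<\min a_{i,l}^{(k)}$.
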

\begin{rem}\label{dec1}
We call $A_1^{\sigma_1}\boxplus_s ... \boxplus_s A_m^{\sigma_m}$ 
the maximal decomposition of $A$ 
if $s< \min \{ a_{i,l}^{(k)} \mid k \in \{ 1,\ldots,m \}, i,l \in \{ 1, \ldots, j_k \} \}$. 
We show that 
if ${A'}_1^{{\sigma'}_1} \boxplus_s ... \boxplus_s {A'}_{m'}^{{\sigma'}_{m'}}$ 
is the maximal decomposition of $A$, 
then $m=m'$ and there exists a bijective function $\tilde{\sigma}: \{1,\ldots, j \} \to \{1, \ldots ,j \}$ such that 
$A_{\tilde{\sigma}(k)} \cong {A'}_{k}$ in Remark \ref{rem4}. 
Therefore, the maximal decomposition is uniquely determined in a certain sense. 
The maximal decomposition corresponds to clustering a $j$-tuple point 
by the maximal distance in the ultrametric space.
\end{rem}
\begin{prop}\label{prop2*} 
Any element included in $ \mathcal{M}_j$ is a regular matrix. 
In other words, for any $A\in \mathcal{M}_j$, 
there exists a unique solution $y_1,\ldots,y_j$ such that 
$$A\vec{y}^T=\vec{1}^T,$$
where $\vec{y}:=(y_1,\ldots,y_j)$ and $\vec{1}:=(1,\ldots,1)$. 
\end{prop}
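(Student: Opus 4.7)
The plan is to prove by strong induction on $j$ the stronger statement that every $A \in \mathcal{M}_j$ is positive definite; since positive definite matrices are nonsingular, this yields regularity and hence the unique solvability of $A\vec{y}^T = \vec{1}^T$ with $\vec{y}^T = A^{-1}\vec{1}^T$. The base case $j = 1$ is immediate since $A = (1)$.

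For the inductive step I would invoke Proposition \ref{prop1*} to decompose $A = A_1^{\sigma_1} \boxplus_s \cdots \boxplus_s A_m^{\sigma_m}$ with $m \ge 2$, so every block size satisfies $j_k < j$. Because conjugation by a permutation preserves positive definiteness, I may assume $A$ literally has this block form: the $k$-th diagonal block equals $A_k$ and every off-block entry equals $s$. Rewriting this as
\begin{equation*}
A = B' + s\,\vec{1}\vec{1}^T,
\end{equation*}
where $B'$ is block-diagonal with $k$-th block $C_k := A_k - s J_{j_k}$ and $J_{j_k}$ the all-ones $j_k \times j_k$ matrix, reduces the problem to controlling each $C_k$. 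Since off-block entries of $A$ are off-diagonal entries of $A$, one has $0 \le s \le 1-\eta < 1$, so the rescaling $A_k' := C_k/(1-s)$ is well defined. The crucial verification is that $A_k' \in \mathcal{M}_{j_k}$: its diagonal entries equal $1$, its off-diagonals lie in $[0,(1-\eta-s)/(1-s)] \subseteq [0,1-\eta]$, and condition $(c)$ is preserved because the map $a \mapsto (a-s)/(1-s)$ is strictly increasing in $a$ and is applied only to off-diagonal entries. The inductive hypothesis then makes each $A_k'$—hence each $C_k = (1-s)A_k'$ and therefore $B'$—positive definite, while $s\,\vec{1}\vec{1}^T$ is positive semidefinite; summing yields $A$ positive definite, in particular regular.

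I expect the only real obstacle—modest as it is—to be verifying $A_k' \in \mathcal{M}_{j_k}$, so that the induction closes. Of the conditions required by $\mathcal{M}_{j_k}$, symmetry and the unit diagonal are immediate from the construction; the non-obvious pieces are the off-diagonal upper bound $(1-\eta-s)/(1-s) \le 1-\eta$, which rearranges to the trivial $s\eta \ge 0$, and the stability of the ultrametric inequality under the affine rescaling on off-diagonals. Everything else is the standard fact that the sum of a positive definite and a positive semidefinite matrix is positive definite.
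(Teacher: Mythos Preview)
Your proof is correct but takes a genuinely different route from the paper's. You show positive definiteness by writing $A = B' + s\,\vec{1}\vec{1}^T$, rescaling each diagonal block $C_k/(1-s)$ back into $\mathcal{M}_{j_k}$, and invoking induction; this is clean and self-contained. The paper instead explicitly constructs the solution $\vec{y}$ of $A\vec{y}^T=\vec{1}^T$ from the solutions $\vec{z},\vec{z}'$ for a binary decomposition $A=A_1\boxplus_s A_2$, and carries through the induction the auxiliary properties $y_i>0$ and $1-s\sum_i y_i>0$. This is messier, but it produces as a by-product the recursion
\[
\chi(A)=\frac{\chi(A_1)+\chi(A_2)-2s\,\chi(A_1)\chi(A_2)}{1-s^2\chi(A_1)\chi(A_2)}
\]
(equation~\eqref{kk1+}), which is precisely the input \eqref{dec**} needed in Lemmas~\ref{r2} and~\ref{r3}, and hence in Proposition~\ref{prop5*}. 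The paper's own remark after the proposition says that regularity was already known from \cite{mar,nab} and that the longer argument is given because it is used later. So your approach is a nicer proof of the proposition as stated, but if you adopt it you must derive the $\chi$-recursion separately before the subsequent lemmas go through.
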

\begin{rem}
References \cite{mar} and \cite{nab} demonstrated that a strictly symmetric ultrametric matrix is a regular matrix, 
and therefore, that 
the desired result had already been obtained. 
However, we provide another proof because the argument is used later. 
\end{rem}

We define $\Xi$ inductively as follows: 
for $A \in \mathcal{M}_j$ whose maximal decomposition 
is $A_1^{\sigma_1} \boxplus_s ... \boxplus_s A_m^{\sigma_m}$, 
\begin{align*}
\Xi (A) : =\sum_{k=1}^m \Xi (A_k) +(m-1)(1-s), 
\end{align*} 
where $\Xi(A):=0$ for $A\in \mathcal{M}_1$. 
\begin{rem}
We simultaneously show the claim that $\Xi$ is well-defined and $\Xi(A)=\Xi(A')$ for $A \cong A'$ by performing induction on $j\in \mathbb{N}$. 
This is trivial for $j=1$. 
We assume the claim for $1,\ldots,j-1$ and show the claim for $j$. 
Subsequently, Remark \ref{dec1} and the assumption yield 
that $\Xi$ is well-defined for $j$. 
Note that if $A \cong A'$ holds 
and $A_1^{\sigma_1}\boxplus_s ... \boxplus_s A_m^{\sigma_m}$ 
is the maximal decomposition of $A$, 
there exists ${\sigma'}_k$ for $1\le k \le m$ such that 
$A' =A_1^{{\sigma'}_1} \boxplus_s ... \boxplus_s A_m^{{\sigma'}_m}$. 
Therefore, we obtain $\Xi(A)=\Xi(A')$ for $j$ and retain the claim. 
\end{rem}

Next, we observe the additional properties of the matrix included in $\mathcal{M}_j$.  
\begin{prop}\label{prop5*} 
For any $r\le j-1$
\begin{align*}
\min_{A \in {\Xi}^{-1}(\{r\})}
 \chi(A)
=\chi \bigg(A^{(j)}_{1-r/(j-1)}\bigg)
=\frac{j}{j-r}.
\end{align*}
\end{prop}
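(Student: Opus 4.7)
My plan is to prove the proposition by strong induction on $j$, using the maximal decomposition (Proposition \ref{prop1*}); the base case $j=1$ is trivial ($\Xi=0$ forces $r=0$, and $\chi=1=j/(j-r)$). For the inductive step, let $A \in {\cal M}_j$ have maximal decomposition $A_1^{\sigma_1} \boxplus_s \cdots \boxplus_s A_m^{\sigma_m}$ with $m \ge 2$, and set $r_k := \Xi(A_k)$, so that $\sum_k r_k + (m-1)(1-s) = r$. I would first derive a Sherman--Morrison recursion: writing $A = \tilde{A} + s\vec{1}\vec{1}^T$ where $\tilde{A}$ is block-diagonal with blocks $B_k := A_k - sJ_{j_k}$, the rank-one inverse formula yields $\chi(A) = X/(1+sX)$ with $X := \sum_k \chi(B_k)$, and applying the same identity to $A_k = B_k + sJ_{j_k}$ gives $\chi(B_k) = \chi(A_k)/(1 - s\chi(A_k))$. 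Strict ultrametricity of $A_k$ and $B_k$ makes them positive definite, so all quantities are positive and the map $c \mapsto c/(1-sc)$ is monotone increasing where needed.

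Applying the inductive hypothesis $\chi(A_k) \ge j_k/(j_k - r_k)$, monotonicity gives $\chi(B_k) \ge j_k/\alpha_k$ with $\alpha_k := (1-s)j_k - r_k$. Substituting into $1/\chi(A) = s + 1/X$ reduces the target bound $\chi(A) \ge j/(j-r)$ to the single arithmetic inequality
\[
\sum_{k=1}^m \frac{j_k}{\alpha_k} \ge \frac{j}{\gamma}, \qquad \gamma := (1-s)j - r.
\]
To prove this I first establish, by a separate routine induction on $j'$, the auxiliary bound $\Xi(A') \le (j'-1)(1-s')$ for every $A' \in {\cal M}_{j'}$ with minimum off-diagonal value $s'$. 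Applied inside each $A_k$ and using that the sub-minimum $s_k$ strictly exceeds $s$, this forces $r_k < (j_k-1)(1-s)$, equivalently $\alpha_k > 1-s$ whenever $j_k \ge 2$; for $j_k=1$ one has $\alpha_k = 1-s$ automatically. Hence $\alpha_k \ge 1-s$ in every case.

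With this in hand, I set $\xi_k := \alpha_k/(1-s) - 1 \ge 0$, which transforms the global identity $\sum_k r_k + (m-1)(1-s) = r$ into $\sum_k \xi_k = \gamma/(1-s) - 1 =: u$. Jensen's inequality for the convex function $1/(1+\cdot)$ with weights $p_k := j_k/j$ gives $\sum_k j_k/(1+\xi_k) \ge j^2/(j + \sum_k j_k\xi_k)$; combined with $\sum_k j_k\xi_k \le j\sum_k \xi_k = ju$ (since $j_k\le j$), this yields $\sum_k j_k/(1+\xi_k) \ge j/(1+u)$, which rearranges to exactly $\sum_k j_k/\alpha_k \ge j/\gamma$. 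The explicit value $\chi(A^{(j)}_{1-r/(j-1)}) = j/(j-r)$ is a direct computation using the inverse formula $(cI+dJ)^{-1} = c^{-1}I - c^{-1}d(c+jd)^{-1}J$ with $c=1-s$, $d=s$, and the verification that $\Xi(A^{(j)}_{1-r/(j-1)}) = r$.

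The main obstacle is the arithmetic inequality: a direct Cauchy--Schwarz bound $\sum j_k/\alpha_k \ge (\sum\sqrt{j_k})^2/\sum\alpha_k$ is too weak when the $j_k$ are unequal, especially near the extremal regime $r \to (j-1)(1-s)$. The crux is to extract the structural constraint $\alpha_k \ge 1-s$ from the maximal decomposition before changing coordinates to $\xi_k$ and applying Jensen; tracking equality in Jensen together with the estimate $\sum j_k\xi_k \le j\sum \xi_k$ also forces $\xi_k=0$ for all $k$ and $j_k=1$ for all $k$, recovering that the minimizer is unique and equal to $A^{(j)}_{1-r/(j-1)}$.
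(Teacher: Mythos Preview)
Your argument is correct and complete, but it differs substantially from the paper's proof. The paper never invokes the full $m$-block maximal decomposition nor the Sherman--Morrison identity. Instead it works with a \emph{two}-block splitting $A=A_1\boxplus_s A_2$ (not required to be maximal) and relies on two auxiliary results: Lemma~\ref{r2}, which says that $\chi(A_1\boxplus_s A_2)$ is monotone in $\chi(A_1)$ and $\chi(A_2)$ (this is precisely your Sherman--Morrison identity specialized to two blocks, written there as $g(t,b,c)=(b+c-2tbc)/(1-t^2bc)$), and Lemma~\ref{r3}, a calculus optimization over the two-parameter family $A^{(g)}_{\gamma_2}\boxplus_\gamma A^{(h)}_{\gamma_1}$ with $\Xi$ fixed. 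The paper then chains four inequalities, successively replacing $A$ by $A^{(g)}_{\gamma_2}\boxplus_\gamma A^{(h)}_{\gamma_1}$, then by $A^{(g)}_{\gamma_2}\boxplus_{\tilde\gamma_1}A^{(h)}_{\tilde\gamma_1}$, then by $A^{(j-1)}_{\tilde\gamma_2}\boxplus_{\tilde\gamma_1}A^{(1)}$, and finally by $A^{(j)}_{1-r/(j-1)}$.

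Your route is cleaner: writing $1/\chi(A)=s+1/\sum_k\chi(B_k)$ handles all $m\ge 2$ at once and collapses the problem to a single scalar inequality $\sum_k j_k/\alpha_k\ge j/\gamma$, which Jensen dispatches after you secure $\alpha_k\ge 1-s$ via the auxiliary bound $\Xi(A')\le (j'-1)(1-s')$. That auxiliary bound is not stated in the paper, though it is implicitly true there. What the paper's approach buys is a more explicit, constructive path from a general $A$ to the minimizer (useful if one later wants to perturb or refine the argument), at the cost of the four-step case analysis and the somewhat delicate calculus in Lemma~\ref{r3}. Your approach also delivers the uniqueness of the minimizer as a byproduct of the equality conditions in Jensen and in $\sum_k j_k\xi_k\le j\sum_k\xi_k$, which the paper does not discuss.
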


We provide the following lemmas concerning the configuration of points, 
which link the matrix argument with Proposition \ref{prop9*}.  
To describe our goal in this section, we give the following lemma. 
Note that 
$ (j-1)\eta \le \Xi(A)\le (j-1)\beta$ for  $A\in \mathcal{M}_j^{\beta,\eta}$. 
\begin{lem}\label{kii*}
For any $\epsilon>0$ and $0<\delta< e^{-j} \epsilon $,  
there exists $C>0$ such that for any $0<t\le (j-1)\beta$, 
$A \in \Xi^{-1}(\{t\})$ and  $n\in \mathbb{N}$, 
$$|\hat{\mathcal{E}}_{\delta}[A]|\le Cn^{2t+2+\epsilon/2}.$$
\end{lem}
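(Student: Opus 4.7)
The plan is to prove by induction on $j$ the stronger bound
\[
|\hat{{\cal E}}_\delta[A]| \le C_j\, n^{2t+2+2(j-1)\delta}
\]
for every $A \in {\cal M}_j^{\beta,\eta}$ with $\Xi(A)=t$ and every $0<\delta<1$. The lemma itself then follows from the elementary estimate $\max_{j\ge 1} 2(j-1)e^{-j} = 2e^{-2}<1/2$, so the hypothesis $\delta<e^{-j}\epsilon$ forces $2(j-1)\delta<\epsilon/2$.

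The base case $j=1$ is trivial: $\Xi(A)=0=t$ and $|\hat{{\cal E}}_\delta[A]|=n^2$.

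For the inductive step $j\ge 2$, I would invoke the maximal decomposition $A=A_1^{\sigma_1}\boxplus_s\cdots\boxplus_s A_m^{\sigma_m}$ from Proposition \ref{prop1*}, noting that $m\ge 2$ and that each $A_k\in{\cal M}_{j_k}^{\beta,\eta}$ (the off-diagonal entries of $A$, and hence $s$, are all $\ge 1-\beta$, while those of $A_k$ remain $\le 1-\eta$). Each $\vec{x}\in\hat{{\cal E}}_\delta[A]$ then partitions into $m$ clusters indexed by $\mathrm{Im}(\sigma_k)$: within cluster $k$ the pairwise distances match the entries of $A_k$, while all cross-cluster distances lie in $[2^{-j}n^{1-s},\,2^j n^{1-s+\delta}]$. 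I would count in two stages: first, choose cluster representatives $y_k:=x_{\sigma_k(1)}$, where $y_1$ is free ($\le n^2$ choices) and each further $y_k$ lies in a disk of radius $2^j n^{1-s+\delta}$ around $y_1$, yielding at most $C\,(n^{1-s+\delta})^{2(m-1)}$ configurations; second, for each cluster complete the remaining $j_k-1$ points, which by translation invariance on $\mathbb{Z}_n^2$ amounts to at most $n^{-2}|\hat{{\cal E}}_{\delta'}[A_k]|$, where $\delta':=\delta+O(j/\log n)$ absorbs the $2^{j-j_k}$ discrepancy between the outer ``$2^j$'' distance constants and the ``$2^{j_k}$'' constants expected by the inductive instance.

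Multiplying these counts and applying the inductive hypothesis to each $A_k$, then using $\sum_{k=1}^m(j_k-1)=j-m$ together with the defining recursion $\Xi(A)=(m-1)(1-s)+\sum_k\Xi(A_k)=t$, the exponent of $n$ telescopes to
\[
2+2(m-1)(1-s+\delta)+\sum_{k=1}^m\bigl(2\Xi(A_k)+2(j_k-1)\delta'\bigr)=2+2t+2(j-1)\delta+o(1),
\]
with the vanishing correction absorbed into $C_j$ (for $n$ bounded the claim is trivial). The hard part will be handling the $2^{j-j_k}$ window mismatch cleanly: the within-cluster configuration lies in $\hat{{\cal E}}_{\delta+o(1)}[A_k]$ rather than in $\hat{{\cal E}}_\delta[A_k]$, and one must verify that this additive shift of order $j/\log n$ is indeed swallowed by the positive slack $\epsilon/2-2(j-1)\delta$ guaranteed by the hypothesis $\delta<e^{-j}\epsilon$.
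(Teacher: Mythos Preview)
Your strategy is the same as the paper's: induct on $j$, split $A$ via $\boxplus_s$, fix one anchor per block, and multiply. The paper happens to use a binary split $A=A_1\boxplus_s A_2$ rather than the full $m$-ary maximal decomposition, and tracks an error $e^{j}\epsilon/2$ rather than your tighter $2(j-1)\delta$, but these are cosmetic.

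There is, however, a genuine slip in your handling of the window mismatch. You assert that the within-cluster sub-tuple lies in $\hat{\cal E}_{\delta'}[A_k]$ once $\delta'=\delta+O(j/\log n)$ absorbs the factor $2^{j-j_k}$ in the \emph{upper} constraint. But $\hat{\cal E}_{\delta'}[A_k]$ also carries the \emph{lower} constraint $d\ge 2^{-j_k}n^{1-a}$, which is \emph{stricter} than the $2^{-j}n^{1-a}$ inherited from $\hat{\cal E}_\delta[A]$. Hence the inclusion fails, and $n^{-2}|\hat{\cal E}_{\delta'}[A_k]|$ is not an upper bound for the number of cluster completions; if anything it undercounts. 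The paper avoids this by strengthening the inductive claim to a one-sided bound with an arbitrary outer constant $L$:
\[
\bigl|\,{\cal E}[(0),(L\,n^{1-a_{i,l}+\delta})_{1\le i,l\le j}]\,\bigr|\le C\,n^{2t+2+e^{j}\epsilon/2},
\]
with $C=C(L,j,\epsilon,\delta)$. Dropping the lower cutoff makes the cluster inclusion trivial, and since the \emph{same} $L$ passes down to every sub-instance, no $\delta$-shift is needed at all. If you make this one adjustment---induct on the one-sided count rather than on $\hat{\cal E}_\delta$ itself---your telescoping computation goes through verbatim and your cleaner exponent $2(j-1)\delta$ survives.
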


To introduce Proposition \ref{prop9*},  
we prepare the following notation. 
As per Propositions \ref{kii} and \ref{prop2*}, 
for $\delta>0$, $\vec{x} \in \mathcal{E}[ (n^{\eta }), (n^\beta)] $, and  $n\ge n_0$, we can set 
\begin{align*}
h=h_{\delta}(\vec{x}):=\inf \{\chi (B): \vec{x}\in\hat{\mathcal{E}}_{\delta}[B], 
B \in   \mathcal{M}_j^{\beta, \eta} \}.
\end{align*}
\begin{prop}\label{prop9*}
For any $\epsilon>0$ and $0<\delta< e^{-j} \epsilon$, there exists $C>0$ such that for any $n\in \mathbb{N}$,
\begin{align*}
 \sum_{\vec{x}\in \mathcal{E}[ (n^{\eta }), (n^\beta)]}
n^{-2\alpha  h_{\delta}(\vec{x})}
\le C
n^{\hat{\rho}_j(\alpha,\beta)+\epsilon }.
\end{align*}
\end{prop}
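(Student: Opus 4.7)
The plan is to decompose the sum over $\vec{x}\in{\cal E}[(n^\eta),(n^\beta)]$ according to a finite, $n$-independent discretization of the ultrametric matrix encoding the distances of $\vec{x}$, and then to combine Lemma \ref{kii*} (bin sizes) with Proposition \ref{prop5*} (lower bound on $\chi$). The crucial observation is that the set $\{B\in {\cal M}_j^{\beta,\eta}:\vec{x}\in\hat{{\cal E}}_\delta[B]\}$ has diameter in entries at most $\delta+O(1/\log n)$, so that $h_\delta(\vec{x})$ may be replaced, up to a small Lipschitz error, by $\chi$ of any fixed discretized representative.

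Fix $\delta'>0$ much smaller than $\delta$, to be pinned down later. Let ${\cal M}_j^{\beta,\eta}(\delta')$ denote the finite collection of $A\in{\cal M}_j^{\beta,\eta}$ whose off-diagonal entries lie on a $\delta'$-grid in $[1-\beta,1-\eta]$ (including $1-\eta$ as an endpoint). Rounding the off-diagonal entries of any $B\in{\cal M}_j^{\beta,\eta}$ up to this grid, entry-by-entry, preserves the ultrametric condition $(c)$ (rounding up is monotone, hence commutes with $\min$) and keeps entries in $[1-\beta,1-\eta]$. By Lemma \ref{kii} applied with tolerance $\delta/4$, every $\vec{x}\in{\cal E}[(n^\eta),(n^\beta)]$ (for $n$ large) lies in $\hat{{\cal E}}_{\delta/4}[B]$ for some $B\in{\cal M}_j^{\beta,\eta}$; choosing $\delta'<\delta/2$, the rounded $A\in{\cal M}_j^{\beta,\eta}(\delta')$ still satisfies $\vec{x}\in\hat{{\cal E}}_\delta[A]$. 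The number of bins $N:=|{\cal M}_j^{\beta,\eta}(\delta')|$ depends only on $\delta',j,\beta,\eta$.

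Now fix a bin $A$. For any $B'\in{\cal M}_j^{\beta,\eta}$ with $\vec{x}\in\hat{{\cal E}}_\delta[A]\cap\hat{{\cal E}}_\delta[B']$, combining the two pairs of defining inequalities forces $|a_{i,l}-b'_{i,l}|\le\delta+(2j\log 2)/\log n$ for all $i\ne l$. Since ${\cal M}_j^{\beta,\eta}$ is compact and every element is regular by Proposition \ref{prop2*}, $|\det(\cdot)|$ is bounded below on it, so the entries of $(\cdot)^{-1}$ are uniformly Lipschitz continuous in the entries of the matrix; let $L=L(j,\beta,\eta)$ be a Lipschitz constant for $\chi$. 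Then $\chi(B')\ge\chi(A)-L\delta-o(1)$, and taking the infimum over $B'$, $h_\delta(\vec{x})\ge\chi(A)-L\delta-o(1)$. Applying Lemma \ref{kii*} with parameter $\epsilon/4$ (so $|\hat{{\cal E}}_\delta[A]|\le Cn^{2\Xi(A)+2+\epsilon/8}$) and Proposition \ref{prop5*} (so $\chi(A)\ge j/(j-\Xi(A))$), and writing $t:=\Xi(A)\in[(j-1)\eta,(j-1)\beta]$,
\[
\sum_{\vec{x}\in\hat{{\cal E}}_\delta[A]}n^{-2\alpha h_\delta(\vec{x})}\le Cn^{g(t)+\epsilon/8+2\alpha L\delta+o(1)},\qquad g(t):=2t+2-\frac{2\alpha j}{j-t}.
\]

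An elementary calculus check shows $\max_{t\in[0,(j-1)\beta]}g(t)=\hat{\rho}_j(\alpha,\beta)$: setting $g'(t)=0$ gives the interior critical point $t^*=j-\sqrt{j\alpha}$, at which $g(t^*)=2(j+1-2\sqrt{j\alpha})$, matching the second case of $\hat{\rho}_j$ when $t^*\le(j-1)\beta$ (equivalently $\beta\ge 1+(1-\sqrt{j\alpha})/(j-1)$); otherwise $g$ is strictly increasing on $[0,(j-1)\beta]$ and the maximum is $g((j-1)\beta)=2+2(j-1)\beta-2j\alpha/((1-\beta)(j-1)+1)$, the first case. Because $\delta<e^{-j}\epsilon$, the term $2\alpha L\delta$ can be absorbed into $\epsilon/4$, and summing over the $N$ bins yields $\sum_{\vec{x}}n^{-2\alpha h_\delta(\vec{x})}\le NCn^{\hat{\rho}_j(\alpha,\beta)+\epsilon/2+o(1)}\le C'n^{\hat{\rho}_j(\alpha,\beta)+\epsilon}$ for $n$ large. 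The main obstacle is the Lipschitz step in the third paragraph: pinning down that any competitor $B'$ with $\vec{x}\in\hat{{\cal E}}_\delta[B']$ is within $\delta+o(1)$ of the chosen $A$ in entries, and converting this (via uniform continuity of $\chi$ on ${\cal M}_j^{\beta,\eta}$, which rests on Proposition \ref{prop2*}) into a genuine lower bound on $h_\delta(\vec{x})$. Once this local-to-global continuity is secured, everything else is a routine combination of the cited lemmas with a one-dimensional optimization.
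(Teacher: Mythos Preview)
Your proposal is correct and follows essentially the same route as the paper: cover ${\cal E}[(n^\eta),(n^\beta)]$ by finitely many bins $\hat{\cal E}_\delta[A]$ (Lemma~\ref{kii}), bound bin sizes via Lemma~\ref{kii*}, lower-bound $h_\delta(\vec{x})$ in each bin using Proposition~\ref{prop5*}, and maximize $g(t)=2t+2-2\alpha j/(j-t)$ over $t=\Xi(A)$. You are in fact more explicit than the paper on the one delicate step---the paper simply asserts $\min_{\vec{x}\in\hat{\cal E}_\delta[A],\,A\in\Xi^{-1}(\{t\})}h_\delta(\vec{x})=j/(j-t)$, an identity that only holds up to $O(\delta)$, whereas your Lipschitz argument (which is essentially Proposition~\ref{prop4*}) supplies the missing control; the only loose end, shared with the paper's own proof, is that absorbing $2\alpha L\delta$ into $\epsilon/4$ presupposes a bound on $L$ in terms of $e^{j}$ that is never verified.
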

\begin{proof}[Proof of Proposition \ref{prop9*}]
Note that Proposition \ref{prop5*} implies 
\begin{align*}
\min_{\substack{\vec{x}\in\hat{\mathcal{E}}_{\delta}[A], 
\\A\in {\Xi}^{-1}(\{t\})}}
h_{\delta}(\vec{x})
=\min_{A\in  {\Xi}^{-1}(\{t\})}\chi(A)
=\frac{j}{j-t}.
\end{align*} 
For any $\delta>0$ and $\epsilon>0$, there exist 
$C':=\lceil (\beta-\eta)/\delta \rceil$ and $C>0$ such that for any $n\ge n_0$, 
the left-hand side of the desired formula is bounded by
\begin{align*}
 &(C')^j \max_{0\le t\le (j-1)\beta}
\max_{ A \in \Xi^{-1}(\{t\})}
\sum_{\vec{x}\in \hat{\mathcal{E}}_{\delta}[A]}
n^{-2\alpha  h }\\
\le& C\max_{0\le t\le (j-1)\beta}
n^{2t+2+\epsilon/2} 
\max_{\substack{\vec{x}\in\hat{\mathcal{E}}_{\delta}[A], 
\\A\in {\Xi}^{-1}(\{t\})}}
n^{-2\alpha h}\\
\le& C\max_{0\le t\le (j-1)\beta}
n^{2t+2+\epsilon}
n^{-2\alpha  j/(j-t) }.
\end{align*}
The first inequality comes from Lemma \ref{kii*} and 
the last one comes from Proposition \ref{prop5*}. 
Therefore, we have
\begin{align*}
 \max_{0\le t\le (j-1)\beta}
2t+2-\frac{2\alpha j}{j-t}=\hat{\rho}_j(\alpha,\beta).
\end{align*}
Because it is sufficient to show the claim for $n\ge n_0$, we obtain the desired result.
\end{proof}

\subsection{Proofs of various propositions and lemmas}\label{various pro}
In this subsection, we provide proofs of the propositions and lemmas 
that are introduced in Sections \ref{some in Theoh2} and \ref{cl of matrix}. 

First, we provide the proof of Proposition \ref{prop1*}. 
\begin{proof}[Proof of Proposition \ref{prop1*}]
Let $s:=\min_{1\le i,l \le j} (a_{i,l})_{1\le i,l \le j}$. 
We define $k \sim k'$ if 
$a_{k,k'}>s$. 
First, we show that $\sim$ constructs an equivalence class. 
Note that reflexive and symmetric relations are trivial owing to the definition of $\mathcal{M}_j$, 
and therefore, 
we show a transitive relation. 
Let us assume that $k_1\sim k_2$ and $k_2\sim k_3$. 
The definition of $\mathcal{M}_j$ yields 
$a_{k_1,k_3}\ge \min \{a_{k_1,k_2} ,a_{k_2,k_3} \}>s$. 
Therefore, we obtain $k_1\sim k_3$ and that $\{1,\ldots,j\}/\sim$ is an equivalence class. 
Next, we show the claim. 
If $|\{1,\ldots,j\}/\sim |=m$, we let $G_1,\ldots,G_m$ be  elements in $\{1,\ldots,j\}/\sim$ 
and $j_k$ be $|G_k|$ for $1\le k\le m$.  
For any $1\le k\le m$, we select some bijective function $\sigma_k:\{1,\ldots,j_k\}\to G_k$.  
We set $A_k:=(a^{(k)}_{i,l})_{1\le i,l \le j_k}$ such that $a^{(k)}_{\sigma_k^{-1}(i), \sigma_k^{-1}(l) }=a_{i,l}$ 
for $1\le i,l \le j_k$. 
Then, $A=A_1^{\sigma_1}\boxplus_s ... \boxplus_s A_m^{\sigma_m}$ and 
$s< \min \{ a_{i,l}^{(k)} \mid k \in \{ 1,\ldots,m \}, i,l \in \{ 1, \ldots, j_k \} \}$ holds. 
Therefore, we obtain the desired result.
\end{proof}

\begin{rem}\label{rem4}
If $A_1^{\sigma_1}\boxplus_s ... \boxplus_s A_m^{\sigma_m}$ is 
the expression of the maximal decomposition of $A$, 
it is trivial that $\{\mathrm{Im \; } \sigma_k: 1\le k\le m\}=\{1,\ldots,j\}/\sim$ by the above proof. 
Then, it easily yields the uniqueness of the maximal decomposition and the claim in Remark \ref{dec1}. 
\end{rem}
Hereafter, we assume that $A=A_1^{\sigma_1}\boxplus_s A_2^{\sigma_2}$ unless otherwise stated. 
Note that that $A_1^{\sigma_1}\boxplus_s A_2^{\sigma_2}$ is not always the maximal decomposition of $A$. 
Let $g:=|\mathrm{Im \;} \sigma_1|$ and $h:=|\mathrm{Im \;} \sigma_2|$. 
\begin{proof}[Proof of Proposition \ref{prop2*}]
We show the claim and present the following conditions $(A)$ and $(B)$: when $j=2$, 
\begin{align*}
&(A)\quad 1-s\sum_{i=1}^j y_i >0,\\
&(B)\quad y_i> 0 \quad \text{ for any }1\le i\le j. 
\end{align*}
When $j=1$, we change $(A)$ to $1-y_1 \ge0$. 
As per symmetry, we need to show the result for only the case in which  
$\mathrm{Im \;} \sigma_1=\{1,\ldots,g\}$ and $\mathrm{Im \;} \sigma_2=\{g+1,\ldots,j\}$.   
We show the results by performing induction on $j \in \mathbb{N}$. 
It is trivial that the claim holds for $j=1$ as $y_1=1$.  
Assuming that the claims $(A)$ and $(B)$ hold for $1,\ldots,j-1$, we show that the claims $(A)$ and $(B)$ hold for $j$.  
As per symmetry, this assumption yields that $A_1\in \mathcal{M}_g$ determines 
a unique solution $\vec{z}:=(z_1,\ldots,z_g)$  such that 
$A_1 \vec{z}^T=\vec{1}^T$ and 
$A_2 \in \mathcal{M}_h$ determines 
a unique solution $\vec{z}':=(z'_1,\ldots,z'_h)$ such that 
$A_2 \vec{z}'^T=\vec{1}^T$. 
In addition, the assumption of $(B)$ yields $z_i> 0$ for any $1\le i\le g$ and 
$z'_i>0$ for any $1\le i\le h$. 
Therefore, it holds that $z_i+s\sum_{1\le l \le g, l\neq i} z_l\le 1$ for any $1\le i\le g$ and 
$z_i+s\sum_{1\le l \le h, l\neq i} z'_l\le 1$ for any $1\le i\le h$, and we are able to derive the following equation:
\begin{align*}
\sum_{i=1}^g z_i \le \frac{g}{1+(g-1)s}, \quad 
 \sum_{i=1}^h z'_i \le \frac{h}{1+(h-1)s}. 
\end{align*}
There exists $c>0$ such that 
\begin{align}\label{asm*}
1-s^2 \sum_{i=1}^g z_i \sum_{i=1}^h z'_i  \ge c.
\end{align}
This comes from $s\le 1-\eta$. 
If we set 
\begin{align*}
y_l=&\frac{(1-s \sum_{i=1}^h z'_i )z_l }
{1-s^2 \sum_{i=1}^g z_i \sum_{i=1}^h z'_i }
\quad \text{ for any } 1\le l\le g,\\
y_l=&\frac{(1-s \sum_{i=1}^g z_i )z'_{l-g} }
{1-s^2 \sum_{i=1}^g z_i \sum_{i=1}^h z'_i }
\quad \text{ for any } g+1\le l\le j, 
\end{align*}
we obtain a solution such that 
$A\vec{y}^T=\vec{1}^T$. 
Therefore, we have proved the existence of the solution. 
Hereafter, we observe the properties of $y_1,\ldots,y_j$ 
by assuming their existence. 

First, we show $(B)$. 
According to Proposition \ref{prop1*}, it holds that 
\begin{align}\label{hh1}
&\sum_{i=1}^g (a_{l,i}y_i)+s\sum_{i=g+1}^j y_i=1
\quad \text{ for any } 1\le l\le g,\\
\label{hh2}
&s\sum_{i=1}^g y_i+\sum_{i=g+1}^j (a_{l,i}y_i)=1
\quad \text{ for any } g+1\le l\le j.
\end{align}
Therefore, as per the definition of $z_1,\ldots,z_g$, $z'_1,\ldots,z'_h$, we obtain 
\begin{align*}
\sum_{i=1}^g y_i=\bigg(1-s\sum_{i=g+1}^j y_i \bigg)\sum_{i=1}^g z_i, \quad
\sum_{i=g+1}^j y_i=\bigg(1-s\sum_{i=1}^g y_i \bigg)\sum_{i=1}^h z'_i.
\end{align*}
A simple computation and (\ref{asm*}) yield
\begin{align}\notag
\sum_{i=1}^g y_i&= \frac{\sum_{i=1}^g z_i -s \sum_{i=1}^g z_i \sum_{i=1}^h z'_i}
{1-s^2 \sum_{i=1}^g z_i \sum_{i=1}^h z'_i },\\
\label{kk1}
\sum_{i=g+1}^j y_i&=\frac{\sum_{i=1}^h z'_i-s \sum_{i=1}^h z'_i\sum_{i=1}^g z_i}
{1-s^2 \sum_{i=1}^g z_i \sum_{i=1}^h z'_i },
\end{align}
and therefore, 
\begin{align}\label{kk1+}
\sum_{i=1}^j y_i= 
\frac{\sum_{i=1}^g z_i+\sum_{i=1}^h z'_i -2s \sum_{i=1}^g z_i \sum_{i=1}^h z'_i }
{1-s^2 \sum_{i=1}^g z_i \sum_{i=1}^h z'_i }.
\end{align}
If we let $\tilde{s}=\min _{ i,l \in \mathrm{Im \; } \sigma_1} a_{i,l}$, 
by assuming $(A)$ and setting $g\ge2$, we obtain 
\begin{align}\label{kk2}
1-\tilde{s}\sum_{i=1}^g z_i > 0. 
\end{align}
Because $\tilde{s}\ge s$ for $g\ge2$, (\ref{kk1}) and  (\ref{kk2}) yield
\begin{align*}
1-s\sum_{i=1}^g y_i =\frac{1-s \sum_{i=1}^g z_i }
{1-s^2 \sum_{i=1}^g z_i \sum_{i=1}^h z'_i }
\ge \frac{1-\tilde{s} \sum_{i=1}^g z_i }
{1-s^2 \sum_{i=1}^g z_i \sum_{i=1}^h z'_i }
> 0. 
\end{align*}
In addition, because $\tilde{s}> s$ for $g=1$,  (\ref{kk1}) and  (\ref{kk2}) yield
\begin{align*}
1-sy_1 =\frac{1-s z_1 }
{1-s^2 z_1 \sum_{i=1}^h z'_i }
>\frac{1-\tilde{s} z_1 }
{1-s^2 z_1 \sum_{i=1}^h z'_i }
\ge 0. 
\end{align*} 
As per the definition of $y_{g+1},\ldots,y_j$, $z'_1,\ldots,z'_h$ and (\ref{hh2}), we have 
$(y_{g+1},\ldots,y_j) = (1-s\sum_{i=1}^g y_i )\vec{z}'$. 
Subsequently, because we assume that the solution of $\vec{z}'$ satisfies $z'_i>0$ for any $1\le i\le h$, 
it holds that any solution $y_{g+1},\ldots,y_j$ satisfies 
$y_i>0$ for any $g+1\le i\le j$. 
In addition, as per the same above-mentioned argument, 
the definitions of $y_1,\ldots,y_g$, $z_1,\ldots,z_g$ and (\ref{hh1}) 
yield $y_i>0$ for any $1\le i\le g$, 
and therefore, $(B)$ holds. 

Secondly, we  show $(A)$. 
The fact that $y_i>0$ for any $1\le i\le j$ and $\sum_{i=1}^ja_{l,i}y_i=1$ for any $1\le l\le j$ yields $s\sum_{i=1}^j y_i< 1$ 
allows us to obtain the desired results.

Now, we turn to prove the uniqueness of the solution using the result of $(B)$ that we already obtained. 
In general, it is known that 
\begin{align*}
V:=\{\vec{y}:A\vec{y}^T=\vec{1}^T \}=x_0+\text{Ker}A,
\end{align*}
where $x_0$ is a characteristic solution for $A\vec{y}^T=\vec{1}^T$. 
As per the result of $(B)$, it holds that 
$\{v=(v_1,\ldots,v_j): v_i>0\}\supset V$. 
Because $V$ is a linear space, the equation Ker$A \neq \{0\}$ is contradictory. 
Subsequently, Ker$A= \{0\}$, and therefore, we have the desired claim. 
\end{proof}
To show Proposition \ref{prop5*}, we use the two lemmas. 
We argue with the values of $\Xi$ and $\chi$ in Proposition \ref{prop5*} and the following lemmas. 
Because $\Xi$ and $\chi$ are independent of $\sigma_1$ and $\sigma_2$, 
we omit $\sigma_1$ and $\sigma_2$. 
For example, we write $A_1 \boxplus_s A_2$ for $A_1^{\sigma_1} \boxplus_s A_2^{\sigma_2}$. 



 \begin{lem}\label{r2} 
Consider $A$, $\overline{A} \in \mathcal{M}_j$ 
such that $A=A_1\boxplus_s A_2$ and 
$\overline{A}=\overline{A}_1 \boxplus_s \overline{A}_2$. 
Subsequently, if
\begin{align*}
\chi (A_1 )\ge \chi (\overline{A}_1)\text{ and }
\chi (A_2 ) \ge \chi (\overline{A}_2 ),
\end{align*}
it holds that 
\begin{align*}
\chi(A)
\ge \chi (\overline{A}).
\end{align*}
\end{lem}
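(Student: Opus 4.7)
The plan is to exploit the closed form for $\chi$ of a matrix in $\mathcal{M}_j$ in terms of its $\boxplus_s$-components that is essentially already extracted in the proof of Proposition \ref{prop2*}. First I would note that, for any regular matrix $M$, if $M\vec{w}^T = \vec{1}^T$ then $\chi(M) = \sum_i w_i$: indeed $\vec{w}^T = M^{-1}\vec{1}^T$, so $w_i = \sum_l (M^{-1})_{i,l}$ and summing over $i$ gives the sum of all entries of $M^{-1}$. Writing $u := \chi(A_1)$, $v := \chi(A_2)$, $\bar u := \chi(\overline{A}_1)$, $\bar v := \chi(\overline{A}_2)$, equation (\ref{kk1+}) from the proof of Proposition \ref{prop2*} then immediately rewrites as
$$\chi(A) = \frac{u + v - 2suv}{1 - s^2 uv} =: f(u,v), \qquad \chi(\overline{A}) = f(\bar u, \bar v).$$

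The problem thus reduces to showing that $f$ is non-decreasing in each variable on the relevant domain. A short direct computation yields
$$\frac{\partial f}{\partial u}(u,v) = \frac{(1-sv)^2}{(1 - s^2 uv)^2}, \qquad \frac{\partial f}{\partial v}(u,v) = \frac{(1-su)^2}{(1 - s^2 uv)^2},$$
both manifestly nonnegative wherever the denominator does not vanish. By (\ref{asm*}) in the proof of Proposition \ref{prop2*}, combined with the hypothesis $s \le 1 - \eta$, one has $1 - s^2 uv > 0$ and $1 - s^2 \bar u \bar v > 0$; since $0 \le \bar u \le u$ and $0 \le \bar v \le v$, the bound $1 - s^2 u' v' \geq 1 - s^2 uv > 0$ persists for every $(u',v')$ in the rectangle $[\bar u, u] \times [\bar v, v]$, so the monotonicity computation is legitimate there.

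Applying monotonicity in each coordinate separately would then yield
$$\chi(\overline{A}) = f(\bar u, \bar v) \le f(u, \bar v) \le f(u, v) = \chi(A),$$
which is exactly the lemma. The only mild obstacle is the bookkeeping needed to verify that the denominator stays positive on the full rectangle, but this follows at once from (\ref{asm*}); no further input from the ultrametric structure is required beyond the identity inherited from the proof of Proposition \ref{prop2*} and the invertibility it guarantees.
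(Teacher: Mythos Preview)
Your proof is correct and follows essentially the same approach as the paper: both express $\chi(A)$ via the identity (\ref{kk1+}) as $g(s,\chi(A_1),\chi(A_2))=\dfrac{\chi(A_1)+\chi(A_2)-2s\,\chi(A_1)\chi(A_2)}{1-s^2\chi(A_1)\chi(A_2)}$ and then differentiate to obtain monotonicity in each component, using (\ref{asm*}) to keep the denominator positive. The only cosmetic difference is that the paper reduces to changing one block at a time (assuming $A_1=\overline{A}_1$ and iterating), whereas you compute both partials at once and interpolate across the rectangle $[\bar u,u]\times[\bar v,v]$.
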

\begin{proof}
Note that it suffices to prove the case that  
$\chi(A_1)=\chi(\overline{A}_1)$ 
because we can prove the claim by repeating the same proof.  
Let 
\begin{align*}
g(t,b,c):=\frac{b+c-2tbc}{1-t^2bc},
\end{align*}
where $1-t^2bc>0$. 
It is found that $g$ monotonically increases in $c$ 
because a simple computation yields
\begin{align*}
\frac{\partial g}{\partial c}
=\frac{(1-tb)^2}{(1-t^2bc)^2}\ge 0.
\end{align*}
Note that  if we consider $A$ and $\vec{y}$ such that 
$A\vec{y}^T=\vec{1}^T$, 
$\chi (A)=\sum_{i=1}^j y_i$ holds; 
then, (\ref{kk1+}) yields
\begin{align}\label{dec**}
\chi (A)
=g(s,\chi (A_1), \chi (A_2)). 
\end{align}
Because $g(t,b,c)$ monotonically increases in $c$, 
the assumption yields the desired result. 
\end{proof}


 \begin{lem}\label{r3} 
Consider $r\le j-1$, $0 \le \gamma \le \gamma_1 \le \gamma_2\le 1$ 
with  $r=(g-1)(1-\gamma_2) +(h-1)(1-\gamma_1)+1-\gamma$ and $g+h=j$ with $g$, $h\ge1$,  
which satisfy $A^{(g)}_{\gamma_2}\boxplus_{\gamma} A^{(h)}_{\gamma_1}\in \Xi^{-1}(\{r\})$. 
Then, fixing the values $\gamma_2$ and $r$, 
$\chi (A^{(g)}_{\gamma_2}\boxplus_{\gamma} A^{(h)}_{\gamma_1})$ 
is minimized at $\gamma=\gamma_1$. 
\end{lem}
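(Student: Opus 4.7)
The plan is to view $\chi(A^{(g)}_{\gamma_2}\boxplus_{\gamma}A^{(h)}_{\gamma_1})$ as a function of the single parameter $\gamma$ (with $\gamma_2$ and $r$ fixed, the constraint $\Xi(A)=r$ determines $\gamma_1$ from $\gamma$), and to show this function is non-increasing on the feasible interval, so the minimum is attained at the upper endpoint $\gamma=\gamma_1$.

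By identity (\ref{dec**}) from the proof of Lemma \ref{r2}, $\chi(A^{(g)}_{\gamma_2}\boxplus_{\gamma}A^{(h)}_{\gamma_1})=\phi(\gamma,B,C)$, where $\phi(t,b,c):=(b+c-2tbc)/(1-t^2bc)$. By the symmetry of $A^{(k)}_{\rho}$ together with the uniqueness part of Proposition \ref{prop2*}, one has $B:=\chi(A^{(g)}_{\gamma_2})=g/\alpha$ and $C:=\chi(A^{(h)}_{\gamma_1})=h/\beta$, where $\alpha:=1+(g-1)\gamma_2$ and $\beta:=1+(h-1)\gamma_1$. The case $h=1$ is vacuous since the constraint then fixes $\gamma$ uniquely; assume $h\ge 2$. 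The constraint reads $(h-1)\gamma_1+\gamma=K$ for a constant $K$ depending only on $r,g,h,\gamma_2$, so $\beta=1+K-\gamma$ and hence $dC/d\gamma=h/\beta^{2}=C^{2}/h$.

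A direct computation parallel to the derivation of $\partial\phi/\partial c=(1-tb)^{2}/(1-t^{2}bc)^{2}$ given in the proof of Lemma \ref{r2} yields $\partial\phi/\partial t=-2bc(1-tb)(1-tc)/(1-t^{2}bc)^{2}$. Combining via the chain rule and factoring,
\[
\frac{d\chi}{d\gamma}=\frac{C(1-\gamma B)}{h(1-\gamma^{2}BC)^{2}}\,\bigl[\,C(1-\gamma B)-2hB(1-\gamma C)\,\bigr].
\]
The prefactor is non-negative: $\gamma\le\gamma_{2}\le 1$ forces $\gamma B=g\gamma/(1+(g-1)\gamma_{2})\le 1$; similarly $\gamma\le\gamma_{1}$ gives $\gamma C\le 1$; and $1-\gamma^{2}BC>0$ follows from Proposition \ref{prop2*} applied to $A^{(g)}_{\gamma_{2}}\boxplus_{\gamma}A^{(h)}_{\gamma_{1}}$ (cf.\ (\ref{asm*})). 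It therefore suffices to show the bracket is non-positive, i.e.\ $C(1-\gamma B)\le 2hB(1-\gamma C)$; substituting $B=g/\alpha$, $C=h/\beta$ and clearing denominators, this reduces to
\[
\alpha\le g\bigl(2\beta-(2h-1)\gamma\bigr).
\]

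Using $\gamma\le\gamma_{1}$ in the form $2\beta\ge 2+2(h-1)\gamma$, the right-hand side is bounded below by $g(2-\gamma)$. And $\gamma\le\gamma_{2}\le 1$ yields $g\gamma+(g-1)\gamma_{2}\le(2g-1)\gamma_{2}\le 2g-1$, i.e.\ $\alpha=1+(g-1)\gamma_{2}\le g(2-\gamma)$. Thus $d\chi/d\gamma\le 0$ on the feasible interval, and the minimum is attained at $\gamma=\gamma_{1}$. The main obstacle will be the algebraic reduction: carrying the chain-rule differentiation through cleanly and spotting that the bracket inequality collapses to the compact form $\alpha\le g(2\beta-(2h-1)\gamma)$ in which each of the hypotheses $\gamma\le\gamma_{1}\le\gamma_{2}\le 1$ is used exactly once; the remainder of the argument is routine.
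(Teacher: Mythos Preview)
Your proof is correct and follows the same overall strategy as the paper's: regard $\chi$ as a function of the single variable $\gamma$ (with $\gamma_2$ and $r$ fixed, so $\gamma_1$ is determined), compute the derivative, and show it is non-positive on the feasible interval $0\le\gamma\le\gamma_1$.

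The execution differs in how the sign of the derivative is extracted. The paper substitutes directly, writing $f(\gamma)=(g(q-\gamma)+hp-2\gamma gh)/((q-\gamma)p-\gamma^2gh)$ with $p=1+(g-1)\gamma_2$ and $q=1+(h-1)\gamma_1+\gamma$ constant, and then studies the numerator $\tilde f$ of $f'$ as a downward-opening quadratic in $\gamma$: it locates the apex at $(gq+hp)/(g(1+2h))\ge(q-1)/h$ and checks $\tilde f((q-1)/h)\le 0$, whence $\tilde f\le 0$ on $[0,(q-1)/h]$. Your route instead applies the chain rule to $\phi(t,b,c)=(b+c-2tbc)/(1-t^2bc)$, using the partial derivatives $\partial_c\phi=(1-tb)^2/(1-t^2bc)^2$ and $\partial_t\phi=-2bc(1-tb)(1-tc)/(1-t^2bc)^2$ together with $dC/d\gamma=C^2/h$; this factors the derivative as a manifestly non-negative prefactor times a bracket, and the bracket inequality collapses to the linear condition $\alpha\le g(2\beta-(2h-1)\gamma)$, which follows cleanly from $\gamma\le\gamma_1\le\gamma_2\le 1$. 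Both arguments are elementary calculus; yours avoids the quadratic vertex analysis and makes the role of each hypothesis more transparent, while the paper's version is more self-contained (it does not rely on separately computing $\partial_t\phi$).
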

\begin{proof}
When we fix the values $\gamma_2$ and $r$, we find that  
$r=\Xi(A^{(g)}_{\gamma_2}\boxplus_{\gamma} A^{(h)}_{\gamma_1})
=(g-1)(1-\gamma_2) +(h-1)(1-\gamma_1)+1-\gamma$ is a constant, and therefore, we obtain 
$(g-1)\gamma_2+(h-1)\gamma_1+\gamma$. 
Subsequently, if we set $p:=(g-1)\gamma_2+1$ and $q:=(h-1)\gamma_1+\gamma+1$, we find that
$p$ and $q$ are constants. 
Note that $0 \le \gamma \le (q-1)/h$. 
In addition, 
\begin{align*}
\chi(A^{(g)}_{\gamma_2})
=\frac{g}{(g-1)\gamma_2+1}, \quad
\chi(A^{(h)}_{\gamma_1})
=\frac{h}{(h-1)\gamma_1+1}.
\end{align*}
According to (\ref{dec**}), it holds that 
\begin{align*}
f(\gamma):= &\chi (A^{(g)}_{\gamma_2}\boxplus_{\gamma} A^{(h)}_{\gamma_1})\\
=&\frac{g(q-\gamma)+hp-2\gamma gh}
{( q-\gamma )p -\gamma^2 gh}. 
\end{align*}  
It suffices to show the claim that $f$ monotonically decreases in $0 \le \gamma \le  (q-1)/h$. 
A simple computation yields 
\begin{align*}
\frac{\partial f(\gamma)}{\partial \gamma}
=&\frac{((q-\gamma)p-\gamma^2gh)(-g-2gh)-
(g(q-\gamma)+hp-2\gamma gh )(-p-2\gamma gh)}
{((q-\gamma )p -\gamma^2 gh)^2}.
\end{align*}
Set
\begin{align}\label{l0}
\tilde{f}:
=-g^2h(1+2h)(\gamma -\frac{gq+hp}{g+2gh})^2
+\frac{h(gq+hp)^2}{(1+2h)} -2qpgh+hp^2.
\end{align}
Note that it holds that 
\begin{align}\label{l1}
 \frac{q-1}{h} \le 
\text{the apex (summit) of }\tilde{f}
=
\frac{(gq+hp)}
{g(1+2h)}
\end{align}
because  
$h(gq+hp)-(q-1)g(1+2h)=h(g+h)(1-\gamma_2) \ge 0$. 
The first inequality comes from $-(q-1)\ge -((h-1)\gamma_2+\gamma_2)= -h\gamma_2$ 
and the second one comes from $\gamma_2\le 1$.
Therefore, we obtain (\ref{l1}). 
In addition, we claim 
\begin{align}\label{l2}
\tilde{f}\bigg(\frac{q-1}{h} \bigg) \le 0
\end{align}
because $h\tilde{f}((q-1)/h)
=-(ph-g(q-1))(2gh-g(q-1)-hp)
\le 0$.  
The inequality comes from 
$ph-g(q-1)\ge gh\gamma_2 \ge0 $ and $2gh-g(q-1)-hp\ge 2gh-gh -hp\ge0$. 
Therefore, as per (\ref{l0}), (\ref{l1}), and (\ref{l2}), we obtain
 for $0 \le \gamma \le (q-1)/h$,  
$\tilde{f}(\gamma) \le 0$ 
and the desired result. 
\end{proof}

\begin{proof}[Proof of Proposition \ref{prop5*}]
It is trivial that 
$\chi(A^{(j)}_{1-r/(j-1)} )
=j/(j-r)$ holds; therefore, we show only 
$\min_{A \in \Xi^{-1}(\{r\})}
 \chi(A)
=\chi (A^{(j)}_{1-r/(j-1)} )$. 
We prove the result by performing induction on $j\in \mathbb{N}$. 
If $j=1$ or $2$, it is obvious that the claim holds. 
We assume that the claim holds for $1,\ldots, j-1$ 
and show the claim for $j$. 
For $g\wedge h=1$, 
Lemma \ref{r3} yields the desired result. 
It suffices to show the result for $j\ge4$ with $g$, $h\ge 2$.  

For any $r\le j-1$ and $A=A_1\boxplus_s A_2 \in \Xi^{-1} (\{ r\})$, 
we select $\gamma_1$, $\gamma_2$, and $\gamma$,
which satisfy $\gamma=s$, 
$\Xi(A_1)=\Xi(A^{(g)}_{\gamma_2}) =(g-1)(1-\gamma_2)$, and 
$\Xi(A_2)=\Xi(A^{(h)}_{\gamma_1}) =(h-1)(1-\gamma_1)$.  
Without loss of generality, we can assume that $\gamma_1\le \gamma_2$. 
Note that $A^{(g)}_{\gamma_2} \boxplus_{\gamma} A^{(h)}_{\gamma_1}  \in \Xi^{-1} (\{ r\})$. 
According to Lemma \ref{r2} and this assumption, we obtain
 \begin{align}\label{h*1}
\chi(A^{(g)}_{\gamma_2} \boxplus_{\gamma} A^{(h)}_{\gamma_1})
\le \chi(A).
\end{align}
In addition, we consider $\tilde{\gamma_1}$ satisfying $(h-1)\gamma_1+\gamma=h\tilde{\gamma}_1$. 
Note that $ A^{(g)}_{\gamma_2} \boxplus_{\tilde{\gamma_1}}  A^{(h)}_{\tilde{\gamma}_1} \in \Xi^{-1} (\{ r\})$ and 
$\gamma_1 \ge \tilde{\gamma}_1 \ge \gamma$. 
As per Lemma \ref{r3}, we obtain 
 \begin{align}\label{h*2}
\chi( A^{(g)}_{\gamma_2} \boxplus_{\tilde{\gamma_1}} A^{(h)}_{\tilde{\gamma}_1})
\le \chi(A^{(g)}_{\gamma_2} \boxplus_{\gamma} A^{(h)}_{\gamma_1}).
\end{align}
Note that for any $\sigma_1$ and $\sigma_2$, we can select $\sigma_3$, $\sigma_4$, $\sigma_5$, and $\sigma_6$ such that 
$$(A^{(g)}_{\gamma_2})^{\sigma_1} \boxplus_{\tilde{\gamma_1}} (A^{(h)}_{\tilde{\gamma}_1})^{\sigma_2}
=((A^{(g)}_{\gamma_2})^{\sigma_3} \boxplus_{\tilde{\gamma}_1} (A^{(h-1)}_{\tilde{\gamma}_1})^{\sigma_4})^{\sigma_5} \boxplus_{\tilde{\gamma_1}} (A^{(1)})^{\sigma_6}.$$ 
In addition, we consider $\tilde{\gamma}_2$ satisfying $ (g-1)\gamma_2+h\tilde{\gamma}_1 =(j-2)\tilde{\gamma}_2+\tilde{\gamma}_1$. 
Note that $ A^{(j-1)}_{\tilde{\gamma}_2} \boxplus_{\tilde{\gamma_1}}  A^{(1)}  \in \Xi^{-1} (\{ r\})$ and 
$\gamma_2\ge \tilde{\gamma}_2\ge \tilde{\gamma}_1$. 
According to Lemma \ref{r2} and the assumption, we obtain
 \begin{align}\label{h*3}
\chi( A^{(j-1)}_{\tilde{\gamma}_2} \boxplus_{\tilde{\gamma_1}} A^{(1)})
\le \chi(A^{(g)}_{\gamma_2} \boxplus_{\tilde{\gamma_1}} A^{(h)}_{\tilde{\gamma}_1}).
\end{align}
Finally, Lemma \ref{r3} yields
 \begin{align}\label{h*4}
\chi(A^{(j)}_{1-r/(j-1)})
\le \chi( A^{(j-1)}_{\tilde{\gamma}_2} \boxplus_{\tilde{\gamma_1}} A^{(1)}).
\end{align}
Note that $A^{(j)}_{1-r/(j-1)} \in \Xi^{-1} (\{ r\})$. 
Therefore, as per (\ref{h*1}), (\ref{h*2}), (\ref{h*3}), and (\ref{h*4}), 
we obtain the desired result. 
\end{proof}




\begin{proof}[Proof of Lemma \ref{kii*}]
We prove the claim by performing induction on $j \in \mathbb{N}$. 
Because $\hat{\mathcal{E}}_{\delta}[A]=\mathbb{Z}_n^2$ for $j=1$ and 
$|\hat{\mathcal{E}}_{\delta}[A]| \le |\mathbb{Z}_n^2|\times Cn^{2 t+2\delta}$ for $j=2$ and $A\in  \Xi^{-1}(\{t\})$, 
it is obvious that the desired result holds for $j=1,2$. 
We assume that the result holds for $1,\ldots,j-1$ with $j\ge 3$ and show the result for $j$. 
It suffices to prove that for any $\epsilon>0$, $0<\delta<\epsilon $, and  $L<\infty$, 
there exists $C>0$ such that 
for any $x\in \mathbb{Z}_n^2$, $t\le (j-1)\beta$, 
$A \in \mathcal{M}_j^{\beta,\eta} \cap \Xi^{-1}(\{t\})$, and  $n\in \mathbb{N}$ 
\begin{align}\label{MM}
| \mathcal{E}[(0), (L n^{1-a_{i,l}+\delta} )_{1\le i,l \le j}] |
\le Cn^{2t+2+\exp(j) \epsilon/2}.
\end{align}
First, we show the claim for the case that $g\wedge h=1$. 
Without loss of generality, we only prove it for $h=1$. 
Let $t_1:=\Xi(A_1)$ and $(a_{i,l}^1)_{1\le i,l \le j-1}:=A_1$. 
Note that $t=\Xi(A)=\Xi(A_1)+1-s=t_1+1-s$. 
Then, for any $0<\delta<\epsilon$, there exists $C>0$ such that for any $n\in \mathbb{N}$, 
\begin{align}\label{xx**}
 |\mathcal{E}[(0), (L n^{1-a_{i,l}^1+\delta} )_{1\le i,l \le j-1}] |\le Cn^{2t_1+2+\exp(j-1)\epsilon/2},
 \end{align}
 and therefore,
\begin{align*}
|\mathcal{E}[(0), (L n^{1-a_{i,l}+\delta} )_{1\le i,l \le j}]|  
\le &|\mathcal{E}[(0), (L n^{1-a_{i,l}^1+\delta} )_{1\le i,l \le j-1}]| \times Cn^{2-2s+2\delta} \\
\le & Cn^{2-2s+2t_1+2+\exp(j-1)\epsilon/2+2\delta} \\
\le &Cn^{2t+2+\exp(j)\epsilon/2}.  
\end{align*}
Then, we have proven the claim. 

Next, we show the claim for $j\ge 4$ and $g\wedge h \neq 1$. 
For $k \ge 2$, $x\in \mathbb{Z}_n^2$, and $L>0$, let
\begin{align*}
 \tilde{\mathcal{E}}_{\delta,x}[A]=
 &\tilde{\mathcal{E}}_{\delta,x}[A](k,L)\\
:=&\{(x_{2},\ldots,x_k):(x,x_2,\ldots,x_k) \in \mathcal{E}[(0), (L n^{1-a_{i,l}+\delta} )_{1\le i,l \le k}]  \}.
\end{align*}
Note that 
 \begin{align*}
\mathcal{E}[(0), (L n^{1-a_{i,l}+\delta} )_{1\le i,l \le j}] 
 \subset 
\{\vec{x} : x_1 \in \mathbb{Z}_n^2, (x_2,\ldots,x_j)\in \tilde{\mathcal{E}}_{\delta,x_1}[A]\},
\end{align*}
and therefore, 
 \begin{align*}
|\mathcal{E}[(0), (L n^{1-a_{i,l}+\delta} )_{1\le i,l \le j}] |
\le \sum_{x\in \mathbb{Z}_n^2}
| \tilde{\mathcal{E}}_{\delta,x}[A]|.
\end{align*}
Then, it suffices to prove that for any $\epsilon>0$, $0<\delta<\epsilon $, and $L<\infty$, 
there exists $C>0$ such that 
for any $x\in \mathbb{Z}_n^2$, $t\le (j-1)\beta$, 
$A \in \mathcal{M}_j^{\beta,\eta} \cap \Xi^{-1}(\{t\})$, and  $n\in \mathbb{N}$ 
\begin{align*}
| \tilde{\mathcal{E}}_{\delta,x}[A]|\le Cn^{2t+\exp(j) \epsilon/2}.
\end{align*}
For any $A$, let $t_1:=\Xi(A_1)$ and $t_2:=\Xi(A_2)$, 
and therefore, $t=\Xi(A)=t_1+t_2+1-s$ holds. 
Note that it holds that
\begin{align*}
A_1 \in \mathcal{M}_g^{\beta,\eta} \cap \Xi^{-1}(\{t_1\}),\quad
A_2 \in \mathcal{M}_h^{\beta,\eta} \cap \Xi^{-1}(\{t_2\}). 
\end{align*}
Subsequently, as per the assumption, we find that for any $\epsilon>0$ and $0<\delta<\epsilon$,
there exists $C>0$ such that 
for any $x_1,x_{g+1} \in \mathbb{Z}_n^2$, 
and $n\in \mathbb{N}$, it holds that
\begin{align}\label{xx}
|\tilde{\mathcal{E}}_{\delta,x_1}[A_1]|\le Cn^{2t_1+\exp(g) \epsilon/2}, \quad
|\tilde{\mathcal{E}}_{\delta,x_{g+1}}[A_2]|\le Cn^{2t_2+\exp(h)\epsilon/2}.
\end{align} 
Therefore, if we let $\tilde{D}:=\{x\in \mathbb{Z}_n^2: d(x_1,x)\le Ln^{1-s+\delta}\}$, we have that for any $0<\delta<\epsilon$
\begin{align*}
&|\tilde{\mathcal{E}}_{\delta,x_1}[A]|  \\
\le&\sum_{ x_{g+1}\in \tilde{D} } 
|\{(x_2,\ldots,x_j):(x_2,\ldots,x_g)\in  \tilde{\mathcal{E}}_{\delta,x_1}[A_1],  
(x_{g+2},\ldots,x_j)\in \tilde{\mathcal{E}}_{\delta,x_{g+1}}[A_2]\}| \\
\le &Cn^{2-2s+2\delta} |\tilde{\mathcal{E}}_{\delta,x_1}[A_1]|\times| \tilde{\mathcal{E}}_{\delta,x_{g+1}}[A_2]| \\
\le& Cn^{2t_1+2t_2+2-2s+\exp(j) \epsilon/2}=Cn^{2t+\exp(j) \epsilon/2}. 
\end{align*}
The last inequality comes from $2 \delta +(\exp(g)+\exp(h))\epsilon/2
<(2+\exp(g)/2+\exp(h)/2)\epsilon<\exp(j)\epsilon/2$ for $j\ge 4$. 
Therefore, we  obtain the desired result. 
\end{proof}
\begin{rem}
The reason why we set ``$L$'' in (\ref{MM}) instead of  ``$2^j$'' is 
to ensure that we obtain (\ref{xx**}) and (\ref{xx}).  
\end{rem}

\begin{proof}[Proof of Proposition \ref{kii}]
We  show the result by performing induction on $j \in \mathbb{N}$. 
It is obvious that the claim holds for $j=1$. 
Let us assume that the claim holds for $j-1$ and consider any $\vec{x}\in \mathcal{E}[ (n^{\eta }) , (n^\beta)]$ to show the claim for $j$. 
We set $1\le i_0, l_0 \le j$ such that $d(x_{i_0},x_{l_0})=\min_{1\le i\neq l\le j} d(x_i,x_l)$. 
Without loss of generality, we set $j=l_0$. 
Then, as per the assumption, it is easy to extend the following: 
for $(x_1,\ldots,x_{j-1})\in \mathcal{E}[ (n^{\eta}) , (n^\beta) ](j-1,n)$ and $\delta>0$, 
there exists  
\begin{align}\label{ffa}
(a_{i,l})_{1\le i,l \le j-1}  \in  \mathcal{M}_{j-1}^{\beta,\eta}
\end{align}
such that for all sufficiently large $n\in \mathbb{N}$,
\begin{align*}
(x_1,\ldots,x_{j-1})\in \hat{\mathcal{E}}_{\delta/2}[(a_{i,l})_{1\le i,l \le j-1}  ](j-1,n).
\end{align*}
Set 
$\tilde{A}:= (\tilde{a}_{i,l})_{1\le i,l \le j}$ as follows: 
\begin{align*}
\begin{cases}
\tilde{a}_{i,l}&=\tilde{a}_{l,i}:=a_{i,l} \quad\text{ for any }1\le i, l \le j-1, \\
\tilde{a}_{j,l}&=\tilde{a}_{l,j}:=a_{i_0,l} \quad\text{ for any }1\le l \le j-1  \text{ with }l\neq i_0,\\
\tilde{a}_{j,i_0}&= \tilde{a}_{i_0,j}
:=(1-\frac{\log d(x_{i_0},x_{j})}{\log n}+\delta)\wedge (1-\eta),\\
\tilde{a}_{j,j}&:=1.
\end{cases}
\end{align*}
We  prove that $\tilde{A}\in \mathcal{M}_j^{\beta,\eta}$ and $\vec{x}\in \hat{\mathcal{E}}_{\delta}[\tilde{A}]$. 
We first prove that $\tilde{A}\in \mathcal{M}_j^{\beta,\eta}$. 
It is obvious that the definition of $\tilde{A}$ yields that $\tilde{A}$ is symmetric 
and $1-\beta \le \tilde{a}_{i,l}\le 1-\eta$ for any $1\le i\neq l \le j$. 
Note that $\tilde{a}_{i_0,j}=\max_{1\le i, l\le j} \tilde{a}_{i,l}$ holds 
as $\hat{g}(s):=\max \{ b\in [1-\beta, 1-\eta]: 
2^{-j+1}n^{1-b} \le s \le 2^{j-1} n^{1-b+\delta/2}\}$ is monotonically decreasing; 
$\hat{g}(d(x_{i_0},x_{j}))\le  \tilde{a}_{i_0,j}$ for all sufficiently large $n\in \mathbb{N}$ with 
$2^{j-1}  \le n^{\delta/2}$ and 
$\hat{g}(d(x_i,x_l))\ge \tilde{a}_{i,l}$ hold for any $1\le i,l\le j$ with $i\neq i_0,j$. We only prove that for any $1\le i, l, p\le j$ with $ i\neq l,  l\neq p, p\neq i$,  
$(d)$  $\tilde{a}_{i,l}< \tilde{a}_{i,p}\Rightarrow \tilde{a}_{l,p}=\tilde{a}_{i,l}$ is as follows: 
\begin{enumerate}
 \item $i,l,p\neq j$ : (\ref{ffa}) yields $\tilde{A}\in \mathcal{M}_j^{\beta, \eta}$. 
Therefore, we obtain $(d)$. 
 \item $i=j$ and $p,l\neq i_0$ : If $\tilde{a}_{j,l}<\tilde{a}_{j,p}$, $a_{i_0,l}<a_{i_0,p}$ holds. 
 Therefore, (\ref{ffa}) yields $\tilde{a}_{l,p}=a_{l,p}=a_{i_0,l}=\tilde{a}_{j,l}$.  
 \item ($p=j$ and $i,l\neq i_0$)  or  ($l=j$ and $l,i \neq i_0$)  :  The proof is almost the same as above. 
\end{enumerate}
Because $\tilde{A}$ is symmetric for $j$ and $i_0$, 
$(d)$ remains to be proven for the following cases: 
\begin{enumerate}
 \item $i=j$, $l=i_0$  :  The assumption is contradictory. 
 \item $i=j$, $p=i_0$  :  The result is trivial. 
 \item $l=j$, $p=i_0$  :   The assumption is contradictory. 
\end{enumerate}
Therefore, we obtain $\tilde{A}\in \mathcal{M}_j^{\beta, \eta}$. 
Finally, we prove $\vec{x}\in \hat{\mathcal{E}}_{\delta}[\tilde{A}]$.  
Note that the triangle inequality yields that 
for any $1\le l\le j-1$ with $l\neq i_0$, 
\begin{align*}
 d(x_{j},x_l) \le &d(x_{j},x_{i_0})+d(x_{i_0},x_l)\\
\le& 2^{j-1}n^{1-\tilde{a}_{i_0,j}+\delta}+2^{j-1}n^{1-a_{i_0,l}+\delta}
\le 2^j n^{1-\tilde{a}_{j,l}+\delta},
\end{align*}
and as $d(x_{j},x_l)+d(x_{j},x_{i_0})\ge d(x_{i_0},x_l) $ and $d(x_{j},x_l)\ge d(x_{j},x_{i_0}) $, 
\begin{align*} 
d(x_{j},x_l)
\ge \frac{1}{2}d(x_{i_0},x_l)
\ge  \frac{1}{2}\frac{1}{2^{j-1}}n^{1-\tilde{a}_{i_0,l}}
= \frac{1}{2^j}n^{1-\tilde{a}_{j,l} }.
\end{align*}
Therefore, as it is trivial that the corresponding result holds for $d(x_{j},x_{i_0})$, 
we have $\vec{x}\in \hat{\mathcal{E}}_{\delta}[\tilde{A}]$ 
and we obtain the desired result. 
\end{proof} 

Therefore, we obtain Proposition \ref{prop9*}. 
Finally, we provide the following proposition,  
which is used in Section \ref{Theoh2}. 
\begin{prop}\label{prop4*}
For any $\epsilon>0$, there exists $\delta>0$ such that  
for any $A\in \mathcal{M}_j$ and $|\delta_{i,l}|\le \delta$ it holds that 
$(a_{i,l}+\delta_{i,l})_{i,l=1}^j$ is a regular matrix 
and 
$$\bigg|\chi(A)
- \chi((a_{i,l}+\delta_{i,l})_{i,l=1}^j)\bigg|\le \epsilon.$$ 
\end{prop}
\begin{rem}\label{prop4**}
It is trivial that Proposition \ref{prop4*} yields the following. 
For any $\epsilon>0$, there exists $\delta>0$ such that  
for any $n\in \mathbb{N}$, $A\in \mathcal{M}_j$, and $|\delta_{i,l}|\le \delta$, it holds that 
$(a_{i,l}+\delta_{i,l})_{i,l=1}^j$ is a regular matrix and 
$$\bigg|\frac{\chi(A)}{n}
- \chi((a_{i,l}+\delta_{i,l})_{i,l=1}^j)\bigg|\le \frac{\epsilon}{n}.$$ 
\end{rem}

\begin{proof}[Proof of Proposition \ref{prop4*}]
First, we prove that $\mathcal{M}_j$ is the closed set for each $j\in \mathbb{N}$. 
We consider $\max_{1\le i,l \le j}|a_{i,l}-a'_{i,l}|$ 
for $(a_{i,l})_{1\le i,l \le j}$, $(a'_{i,l})_{1\le i,l \le j} \in \mathcal{M}_j$ 
as the metric on $ \mathcal{M}_j$. 
Consider $(a^m_{i,l})_{1\le i,l \le j}\in \mathcal{M}_j$ and $(a^{\infty}_{i,l})_{1\le i,l \le j}$ such that  
$(a^m_{i,l})_{1\le i,l \le j}\to (a^{\infty}_{i,l})_{1\le i,l \le j}$ as $m\to \infty$. 
Note that it is trivial that $(a^{\infty}_{i,l})_{1\le i,l \le j}$ satisfies $(a)$ and $(b)$. 
Therefore, it suffices to show that $(a^{\infty}_{i,l})_{1\le i,l \le j}$ satisfies $(d)$. 
First, we assume that $a^{\infty}_{i,l}< a^{\infty}_{i,p}$ holds 
for some $1\le i, l, p\le j$ with $i\neq l, l\neq p, p\neq i$. 
For $k\in \mathbb{N}$, set $m_k:=\inf\{m>m_{k-1}: a^m_{i,l}>a^m_{i,p}\}$ with $m_1=1$ and $\inf \emptyset=0$. 
Then, $\sup m_k<\infty$, and as per the definition of $\mathcal{M}_j$, 
there exists $m_0\in \mathbb{N}$ such that for any $m\ge m_0$, 
$a^m_{l,p}= a^m_{i,l}$. 
Therefore, $a^{\infty}_{l,p}= a^{\infty}_{i,l}$ holds. 
$\mathcal{M}_j$ is the closed set and compact. 


Now, we prove the desired result. 
Note that 
\begin{align}\label{det0}
\max_{1\le i,l\le j} |a_{i,l}|\le1. 
\end{align}
In addition, Proposition \ref{prop2*} yields that det$A\neq 0$ holds for $A\in \mathcal{M}_j$. 
By the compactness of $\mathcal{M}_j$,   
we have $\inf_{A \in \mathcal{M}_j}|\text{det}A|\neq 0$. 
Therefore, (\ref{det0}) yields that 
there exists $\delta>0$ such that for any $\delta_{i,l}$ with $|\delta_{i,l}|\le \delta$, 
\begin{align}\label{det1}
\inf_{A\in \mathcal{M}_j}|\mathrm{det}(a_{i,l}+\delta_{i,l})_{1\le i,l \le j}|\neq 0.
\end{align} 
Thus, the first claim holds. 
Finally, it is trivial that (\ref{det0}) and (\ref{det1}) again yield the second claim, 
and therefore, we obtain the desired result. 
\end{proof}


\section{Appendix }
\subsection{Computation of exponents}\label{appe}
In this section, we provide the estimation for the monotonicity of the exponents. 
\begin{lem}\label{prop7*}
For any $j\ge2$ and $0<\alpha, \beta<1$
\begin{align*}
\text{ }\hat{\rho}_j (\alpha,\beta)-\hat{\rho}_{j-1}(\alpha,\beta)\ge 0.
\end{align*}
\end{lem}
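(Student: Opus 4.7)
The plan rests on the variational characterization of $\hat{\rho}_j$ that has already been established in the proof of Proposition \ref{prop9*}. Setting
\[
g_j(t) := 2t + 2 - \frac{2\alpha j}{j-t},
\]
that proof identifies
\[
\hat{\rho}_j(\alpha,\beta) \;=\; \max_{0 \le t \le (j-1)\beta} g_j(t).
\]
Thus the lemma reduces to comparing two constrained maximizations of closely related functions, and my strategy is to exhibit both a pointwise comparison $g_j \ge g_{j-1}$ and an inclusion of admissible ranges, so that the maximum can only grow when $j-1$ is replaced by $j$.

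The key computation is the elementary identity
\[
g_j(t) - g_{j-1}(t) \;=\; \frac{2\alpha(j-1)}{j-1-t} - \frac{2\alpha j}{j-t} \;=\; \frac{2\alpha t}{(j-t)(j-1-t)},
\]
which is nonnegative for every $t \in [0, j-1)$. Because $0 < \beta < 1$, both admissible ranges $[0,(j-2)\beta]$ and $[0,(j-1)\beta]$ sit inside $[0,j-1)$, so the identity applies throughout. Next I would let $t^\ast_{j-1}$ be any maximizer of $g_{j-1}$ on $[0,(j-2)\beta]$ (which exists by continuity on a compact interval). Since $(j-2)\beta \le (j-1)\beta$, the point $t^\ast_{j-1}$ also lies in the range over which $g_j$ is maximized, and therefore
\[
\hat{\rho}_j(\alpha,\beta) \;\ge\; g_j(t^\ast_{j-1}) \;\ge\; g_{j-1}(t^\ast_{j-1}) \;=\; \hat{\rho}_{j-1}(\alpha,\beta),
\]
which is exactly the inequality claimed. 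The boundary case $j=2$ is handled by the same argument: $(j-2)\beta = 0$ forces $t^\ast_{1} = 0$, and one checks that $g_2(0) = 2 - 2\alpha = \hat{\rho}_1(\alpha,\beta)$.

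There is essentially no obstacle here; the only thing to verify carefully is the variational formula for $\hat{\rho}_j$, but this has already been made explicit in the proof of Proposition \ref{prop9*}. A brute-force alternative would be a piecewise case analysis based on the two regimes $\beta \lessgtr 1 + (1-\sqrt{j\alpha})/(j-1)$ for each of $\hat{\rho}_j$ and $\hat{\rho}_{j-1}$, producing four cases and several algebraic inequalities; the approach above sidesteps this bookkeeping by combining a single one-line pointwise identity with the trivial inclusion $[0,(j-2)\beta] \subset [0,(j-1)\beta]$.
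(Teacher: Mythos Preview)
Your argument is correct. The paper does not actually give a proof of this lemma; it merely records that the proof is omitted because ``we just need long and elementary computations,'' which presumably means the four-case piecewise analysis you describe at the end of your proposal. Your route via the variational identity
\[
\hat{\rho}_j(\alpha,\beta)=\max_{0\le t\le (j-1)\beta}\Bigl(2t+2-\frac{2\alpha j}{j-t}\Bigr),
\]
lifted from the proof of Proposition~\ref{prop9*}, is genuinely shorter: the one-line identity $g_j(t)-g_{j-1}(t)=2\alpha t/\bigl((j-t)(j-1-t)\bigr)\ge 0$ on $[0,j-1)$, combined with the inclusion $[0,(j-2)\beta]\subset[0,(j-1)\beta]$, replaces all of the casework. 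The only advantage of the brute-force computation is that it works directly from the explicit piecewise definition of $\hat\rho_j$ and does not rely on the variational formula (whose statement in the paper uses an undefined symbol $\tilde h_t$, though its intended meaning $j/(j-t)$ is clear from the preceding display).
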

\begin{rem}
As discussed previously, this result yields Theorem \ref{h2}. 
In addition, the above lemma is equivalent to Theorem \ref{h2}. 
For any $0< \alpha,\beta <1$,
\begin{align*}
|\{ \vec{x}\in \mathcal{L}_n(\alpha)^j:
d(x_i,x_l)\le n^{\beta} \text{ for any }1\le i,l \le j  \}|
\end{align*}
is monotonically increasing in $j\in \mathbb{N}$. 
Therefore, Theorem \ref{h2} naturally yields the above lemma. 
\end{rem}
\begin{rem}
We omit the proof because we only need long and elementary computations. 
\end{rem}

\section*{Acknowledgments. }
We are grateful to Prof. Kazumasa Kuwada and Kohei Uchiyama for their interesting discussions. 
In addition, we would like to thank Yoshihiro Abe for his helpful comments.



\begin{thebibliography}{99}
\bibitem{ale}
Aleskerov, F., Goldengorin, B. and Pardalos, P. M.: 
Clusters, Orders, and Trees: Methods and Applications. Springer Optimization and Its Applications
Vol. $\mathbf{92}$ (2014)

\bibitem{bel}
Belius, D. and Kistler,  N.: 
The subleading order of two-dimensional cover times. Probab. Theor. Rel. Fields., Vol. $\mathbf{167}$, Issue 1--2, 461--552 (2017)

\bibitem{ho}
Brummelhuis, M. and Hilhorst, H.: 
Covering of a finite lattice by a random walk. Phys. A, $\mathbf{176}$. 387--408 (1991)
\bibitem{ol}
Daviaud, O.: Extremes of the discrete two-dimensional Gaussian free field. Ann. Probab., $\mathbf{34}$. 962--986 (2006)
\bibitem{Dembo*}
Dembo, A.: 
Favorite points, cover times and fractals. In Lectures on Probability Theory and Statistics, volume $\mathbf{1869}$ of Lecture Notes in Math., 1--101. Springer, Berlin. Lectures from the 33rd Probability Summer School held in Saint-Flour, July, 6--23 (2003) 
\bibitem{Dembo**}
Dembo, A.: 
Simple random covering, disconnection, late and favorite points. Proceedings of the International Congress of Mathematicians, Madrid, Volume $\mathbf{III}$. 535--558 (2006)
\bibitem{Dembo1}
Dembo, A., Peres, Y. and Rosen, J.: 
 How large a disc is covered by a random walk in n steps? Ann. Probab., $\mathbf{35}$, 577--601 (2007)
\bibitem{Dembo}
Dembo, A., Peres, Y., Rosen, J. and Zeitouni, O.: 
Thick points for planar Brownian motion and the Erd\H{o}s-Taylor conjecture on random walk. 
Acta Math., $\mathbf{186}$, 239--270 (2001)
\bibitem{Dembo3}
Dembo, A., Peres, Y., Rosen, J. and Zeitouni, O.: 
Cover times for Brownian motion and random walks in two dimensions. Ann. Math., $\mathbf{160}$, 433--464 (2004)
\bibitem{Dembo2}
Dembo, A., Peres, Y., Rosen, J. and Zeitouni, O.: 
Late points for random walks in two dimensions. Ann. Probab., $\mathbf{34}$, 219--263 (2006)

\bibitem{Ding1}
 Ding, J.: Asymptotics of cover times via Gaussian free fields: Bounded-degree graphs and general trees. Ann. Probab., $\mathbf{42}$, 464--496 (2014)

\bibitem{Ding2}
 Ding, J., Lee, J. R. and Peres, Y.: Cover times, blanket times, and majorizing measures. Ann. of Math., $\mathbf{175}$, 1409--1471 (2012)

\bibitem{Eisen}
Eisenbaum, N., Kaspi, H., Marcus, M. B., Rosen, J. and Shi, Z.: A Ray-Knight theorem for symmetric Markov processes. Ann. Probab., $\mathbf{28}$, 1781--1796  (2000)





\bibitem{er}
Erd\H{o}s, P. and Taylor, S. J.: Some problems concerning the structure of random walk paths. Acta Sci. Hung., $\mathbf{11}$, 137--162 (1960)



\bibitem{mar}
Mart\'{i}nez, S., Michon, G. and San Mart\'{i}n, J.: Inverses of ultrametric matrices are of Stieltjes type. 
SIAM J. Matrix Analysis and Appl., $\mathbf{15}$, 98--106 (1994)

\bibitem{nab}
Nabben, R. and Varga, R.S.: 
A linear algebra proof that the inverse of a strictly ultrametric matrix is a strictly diagonally dominant Stieltjes matrix
SIAM J. Matrix Anal. Appl., $\mathbf{15}$, 107--113 (1994)

\bibitem{Law}
Lawler, G. F.: Intersections of Random Walks. Birkhauser, Boston (1991)





\bibitem{mark}
Roberts, M. D.: 
Ultrametric Distance in Syntax. 
Prague Bulletin of Mathematical Linguistics $\mathbf{103}$,  111--130 (2015)

\bibitem{rosen}
Rosen, J.: A random walk proof of the Erd\H{o}s-Taylor conjecture. Periodica Mathematica Hungarica., $\mathbf{50}$, 223--245 (2005)

\end{thebibliography}
\end{document}